\newtheorem{theorem}{Theorem}
\newtheorem{lemma}{Lemma}
\theoremstyle{definition}
\newtheorem{definition}{Definition}
\newtheorem{remark}{Remark}
\newtheorem{example}{Example}
\newcommand{\Hom}{\operatorname{Hom}}
\newcommand{\End}{\operatorname{End}}
\newcommand{\Spec}{\operatorname{Spec}}
\newcommand{\coh}{\operatorname{coh}}
\newcommand{\Ext}{\operatorname{Ext}}
\newcommand{\map}{\rightarrow}
\newcommand{\Map}{\longrightarrow}
\newcommand{\ol}{\overline}
\newcommand{\Gr}{\operatorname{Gr}}
\newcommand{\gr}{\operatorname{gr}}
\newcommand{\qgr}{\operatorname{qgr}}
\newcommand{\QGr}{\operatorname{QGr}}
\newcommand{\ul}{\underline}
\newcommand{\id}{\operatorname{id}}
\newcommand{\Proj}{\operatorname{Proj}}
\DeclareMathOperator{\adeg}{\widehat{deg}}
\newcommand{\Tors}{\operatorname{Tors}}
\newcommand{\cd}{\operatorname{cd}}
\begin{document}

\title{Arakelov theory of noncommutative arithmetic surfaces}

\author{Thomas Borek}

\maketitle

\begin{abstract}
The purpose of the present article is to initiate Arakelov theory of noncommutative arithmetic surfaces. Roughly speaking, a noncommutative arithmetic surface is a noncommutative projective scheme of cohomological dimension 1 of finite type over $\Spec\mathbb Z.$ An important example is the category of coherent right $\mathcal O$-modules, where $\mathcal O$ is a coherent sheaf of $\mathcal O_X$-algebras and $\mathcal O_X$ is the structure sheaf of a commutative arithmetic surface $X.$ Since smooth hermitian metrics are not available in our noncommutative setting, we have to adapt the definition of arithmetic vector bundles on noncommutative arithmetic surfaces. Namely, we consider pairs $(\mathcal E,\beta)$ consisting of a coherent sheaf $\mathcal E$ and an automorphism $\beta$ of the real sheaf $\mathcal E_\mathbb R$ induced by $\mathcal E.$ We define the intersection of two such objects using the determinant of the cohomology and prove a Riemann-Roch theorem for arithmetic line bundles on noncommutative arithmetic surfaces.
\end{abstract}

\textbf{Keywords:} Arakelov theory; noncommutative algebraic geometry; arithmetic surfaces.

\textbf{1991 MSC:} 14G40; 14A22.

\newpage

\section{Introduction}
In recent years, number theory and arithmetic geometry have been enriched by new techniques from noncommutative geometry. For instance, Consani and Marcolli show how noncommutative (differential) geometry \`a la Connes provides a unified description of the archimedean and the totally split degenerate fibres of an arithmetic surface. We refer the interested reader to \cite{CM1}, \cite{CM2}, \cite{CM3}, and \cite{Ma}. Until now, only ideas and methods of noncommutative geometry in the form developed by Connes \cite{C} have been applied to number theory and arithmetic geometry. But as Marcolli mentions in the last chapter of her book \cite{Ma}, in which she addresses the question ``where do we go from here?'', it would be interesting to consider more algebraic versions of noncommutative geometry in the context of number theory and arithmetic geometry. Our work is intended as a first step in this direction.

In a preceding paper \cite{B}, we have established a version of Arakelov theory of noncommutative arithmetic curves. The present article continues our work on noncommutative arithmetic geometry by introducing a version of Arakelov theory of noncommutative arithmetic surfaces. To define noncommutative arithmetic surfaces, we use the well developed theory of noncommutative projective schemes. The standard reference for noncommutative projective schemes is the article \cite{AZ} of Artin and Zhang. Noncommutative algebraic geometry was mainly developed by Artin, Tate and van den Bergh \cite{ATV}, by Manin \cite{M}, and by Kontsevich and Rosenberg \cite{Ro}. The general philosophy of noncommutative algebraic geometry is that noncommutative spaces are made manifest by the modules that live on them in the same way that the properties of a commutative scheme $X$ are manifested by the category of (quasi)-coherent $\mathcal O_X$-modules. The modules over a noncommutative space form, by definition, an abelian category. The category is the basic object of study in noncommutative algebraic geometry. In short, a noncommutative space is an abelian category.

Noncommutative \emph{projective} schemes are determined by specific abelian categories. Let $A$ be an $\mathbb N$-graded right noetherian algebra over a commutative noetherian ring $k,$ let $\Gr A$ be the category of graded right $A$-modules and $\Tors A$ its full subcategory generated by the right bounded modules. The noncommutative projective scheme associated to $A$ is the pair $\Proj A=(\QGr A,\mathcal A),$ where $\QGr A$ is the quotient category $\Gr A/\Tors A$ and $\mathcal A$ the image of $A$ in $\QGr A.$ This definition is justified by a theorem of Serre \cite{Se}, which asserts that if $A$ is a commutative graded algebra generated in degree 1 and $X=\Proj A$ the associated projective scheme, then the category of quasi-coherent $\mathcal O_X$-modules is equivalent to the quotient category $\QGr A.$ Thus the objects in $\QGr A$ are the noncommutative geometric objects analogous to sheaves of $\mathcal O_X$-modules.

Working with the Grothendieck category $\QGr A$ allows to define a lot of useful tools known from the theory of commutative projective schemes, such as ample sheaves, twisting sheaf, cohomology, dualizing object, Serre duality and so on. However, equating noncommutative schemes with abelian categories has drawbacks: most notably there are no points, so the powerful tool of localization is not available. This forces to search for equivalent global definitions of objects which are usually defined locally. Secondly, at present there is almost no connection between noncommutative algebraic geometry considered in the just described way and noncommutative geometry developed by Connes. In particular it is not clear what should be the (noncommutative) differential structure on a noncommutative algebraic variety over $\mathbb C.$ The lack of a differential structure makes it impossible to define \emph{smooth} hermitian metrics on locally free sheaves, which is a serious problem because hermitian vector bundles are essential in Arakelov theory. Fortunately, we found a substitute for the metrics. Namely, we consider pairs $(\mathcal E,\beta)$ consisting of a coherent sheaf $\mathcal E$ and an automorphism $\beta$ of the real sheaf $\mathcal E_\mathbb R$ induced by $\mathcal E.$ It turns out that we can formulate a concise theory using such pairs instead of hermitian vector bundles.

In Section 2 we review the definition and construction of noncommutative projective schemes and provide some results which are needed in the following sections. At the beginning of the third section, we introduce noncommutative arithmetic surfaces and provide some examples. Thereafter we give the precise definition of arithmetic vector bundles on noncommutative arithmetic surfaces. Section 4 is concerned with intersection theory on noncommutative arithmetic surfaces. We define the intersection of two arithmetic line bundles using the determinant of the cohomology. Moreover, we show that the determinant of the cohomology is compatible with Serre duality and we apply this fact to prove a Riemann-Roch theorem for arithmetic line bundles on noncommutative arithmetic surfaces.

\section{Preliminaries}
Firstly, we introduce the terminology and notation from graded ring and module theory which we will use throughout. In this paper $k$ will denote a noetherian commutative ring. A $\mathbb Z$-graded $k$-module $M=\bigoplus_{i\in\mathbb Z}$ is called \emph{locally finite} if each component $M_i$ is a finitely generated $k$-module. Let $A=\bigoplus_{i\in\mathbb Z}A_i$ be a $\mathbb Z$-graded $k$-algebra. If each component $A_i$ is a finitely generated $k$-module, then $A$ is called aThe category of $\mathbb Z$-graded right $A$-modules is denoted by $\Gr A.$ In this category homomorphisms are of degree zero; thus if $M=\bigoplus_{i\in\mathbb Z}M_i$ and $N=\bigoplus_{i\in\mathbb Z}N_i$ are $\mathbb Z$-graded right $A$-modules and $f\in\Hom_{\Gr A}(M,N),$ then $f(M_i)\subset N_i$ for all integers $i.$ Given a $\mathbb Z$-graded $A$-module $M$ and an integer $d,$ the $A_0$-module $\bigoplus_{i\ge d}M_i$ is denoted by $M_{\ge d}.$ If $A$ is $\mathbb N$-graded, then $M_{\ge d}$ is a $\mathbb Z$-graded $A$-submodule of $M,$ and $A_{\ge d}$ is an $\mathbb N$-graded two-sided ideal of $A.$ In the following, ``graded'' without any prefix will mean $\mathbb Z$-graded.

For a graded $A$-module $M=\bigoplus_{i\in\mathbb Z}M_i$ and any integer $d,$ we let $M[d]$ be the graded $A$-module defined by $M[d]_i=M_{i+d}$ for all $i\in\mathbb Z.$ It is clear that the rule $M\mapsto M[d]$ extends to an automorphism of $\Gr A.$ We call $[1]$ the \emph{degree-shift functor.} Note $[d]=[1]^d.$

Let $A$ be a right noetherian $\mathbb N$-graded $k$-algebra. We say that an element $x$ of a graded right $A$-module $M$ is \emph{torsion} if $xA_{\ge d}=0$ for some $d.$ The torsion elements in $M$ form a graded $A$-submodule which we denote by $\tau(M)$ and call the \emph{torsion submodule} of $M.$ A module $M$ is called \emph{torsion-free} if $\tau(M)=0$ and \emph{torsion} if $M=\tau(M).$ For all short exact sequences $0\map M' \map M \map M'' \map 0$ in $\Gr A,$ $M$ is torsion if and only if $M'$ and $M''$ are, so the torsion modules form a dense subcategory of $\Gr A$ for which we will use the notation $\Tors A.$ We set $\QGr A$ for the quotient category $\Gr A/\Tors A;$ the formal definition of this category can be found in \cite{G} or \cite{Po}, but roughly speaking, $\QGr A$ is an abelian category having the same objects as $\Gr A$ but objects in $\Tors A$ become isomorphic to $0.$

As for every quotient category there is a quotient functor $\pi$ from $\Gr A$ to $\QGr A.$ Since the category $\Gr A$ has enough injectives and $\Tors A$ is closed under direct sums, there is a section functor $\sigma$ from $\QGr A$ to $\Gr A$ which is right adjoint to $\pi,$ cf. \cite{Po}, Section 4.4. Hence $\Tors A$ is a localizing subcategory of $\Gr A.$ The functor $\pi$ is exact and the functor $\sigma$ is left exact.

We will modify the notation introduced above by using lower case to indicate that we are working with finitely generated $A$-modules. Thus $\gr A$ denotes the category of finitely generated graded right $A$-modules, $\operatorname{tors}A$ denote the dense subcategory of $\gr A$ of torsion modules, and $\qgr A$ the quotient category $\gr A/\operatorname{tors}A.$ The latter is the full subcategory of noetherian objects of $\QGr A,$ cf. \cite{AZ}, Proposition 2.3.

We proceed with a review of the construction of noncommutative projective schemes, and we fix the notation we will use throughout. The standard reference for the theory of noncommutative projective schemes is the article \cite{AZ} of Artin and Zhang. In his fundamental paper \cite{Se}, Serre proved a theorem which describes the coherent sheaves on a projective scheme in terms of graded modules as follows. Let $A$ be a finitely generated commutative $\mathbb N$-graded $k$-algebra and $X=\Proj A$ the associated projective scheme, let $\coh X$ denote the category of coherent sheaves on $X,$ and let $\mathcal O_X(n)$ denote the $n$th power of the twisting sheaf of $X.$ Define a functor $\Gamma_*:\coh X\map\Gr A$ by
$$\Gamma_*(\mathcal F)=\bigoplus_{n\in\mathbb Z}H^0(X,\mathcal F\otimes\mathcal O_X(n))$$
and let $\pi:\Gr A\map\QGr A$ be the quotient functor. Then Serre proved

\begin{theorem}[\cite{Se}, Proposition 7.8]\label{Serre}
Suppose that $A$ is generated over $k$ by elements of degree 1. Then $\pi\circ\Gamma_*$ defines an equivalence of categories $\coh X\map\qgr A.$
\end{theorem}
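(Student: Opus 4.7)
The plan is to construct a quasi-inverse to $\pi\circ\Gamma_*$ using the classical sheafification functor, and then show that the two natural transformations one gets become isomorphisms, in one direction on $\coh X$ and in the other direction after passing to $\qgr A$.

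First I would recall the sheafification functor $\widetilde{(-)}\colon \gr A\to\coh X$, sending a finitely generated graded $A$-module $M$ to the coherent sheaf $\widetilde M$ obtained by gluing the $\widetilde{M|_{D_+(f)}}=(M_{(f)})^{\sim}$ on the standard affine opens $D_+(f)$ for $f\in A_1$. Because $A$ is generated in degree $1$, these $D_+(f)$ cover $X$, and $\widetilde{A[n]}=\mathcal O_X(n)$. A torsion module $M\in\operatorname{tors}A$ is annihilated by some $A_{\ge d}$, so every localization $M_{(f)}$ vanishes; hence $\widetilde{(-)}$ factors through the quotient and induces a functor $\widetilde{(-)}\colon\qgr A\to\coh X$. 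This will be my candidate quasi-inverse.

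Next I would construct the two natural transformations. For $\mathcal F\in\coh X$ there is a canonical map $\alpha_{\mathcal F}\colon\widetilde{\Gamma_*(\mathcal F)}\to\mathcal F$, defined on each $D_+(f)$ by sending $m/f^n$ with $m\in H^0(X,\mathcal F(n))$ to $m|_{D_+(f)}\otimes f^{-n}$. For $M\in\gr A$ there is a canonical graded homomorphism $\beta_M\colon M\to\Gamma_*(\widetilde M)$ sending $m\in M_n$ to the global section of $\widetilde{M}(n)$ it defines. The theorem amounts to showing that $\alpha$ is an isomorphism in $\coh X$ and that $\beta$ becomes an isomorphism after applying $\pi$.

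For $\alpha$ I would argue locally: on each $D_+(f)$ with $f\in A_1$ the claim is that $\Gamma_*(\mathcal F)_{(f)}\to\mathcal F(D_+(f))$ is an isomorphism, which is the content of Serre's lemma \cite{Se}, no.\ 65; it reduces to the fact that for coherent $\mathcal F$ on $X$, every section of $\mathcal F$ over $D_+(f)$ can be written as $m/f^n$ with $m\in H^0(X,\mathcal F(n))$, and two such representatives agree iff they do after multiplying by a further power of $f$. For $\beta$ I would show that $\ker\beta_M$ and $\coker\beta_M$ lie in $\Tors A$. For the kernel this is immediate, since any $m\in M_n$ with $\beta_M(m)=0$ satisfies $m/1=0$ in each $M_{(f)}$ and hence $f^N m=0$ for all $f\in A_1$ and some $N$, whence $A_{\ge N'}\cdot m=0$. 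For the cokernel one uses Serre vanishing together with the finite generation of $M$: writing $M$ as a quotient of a finite sum of twists of $A$, one reduces to comparing $A$ with $\Gamma_*(\mathcal O_X)$ in large degrees, where Serre's vanishing $H^1(X,\mathcal I(n))=0$ for $n\gg 0$ (applied to the ideal of relations) shows that $\beta_M$ is an isomorphism in all sufficiently high degrees $n\ge d_0$. Thus $\coker\beta_M$ is concentrated in finitely many degrees and is torsion.

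The main obstacle is the second half of the argument about $\beta$: producing the bound $d_0$ uses both the noetherian hypothesis (to get a finite presentation of $M$) and Serre's vanishing theorem on $X$, and these must be combined carefully. Once $\alpha$ and $\pi(\beta)$ are isomorphisms, standard nonsense gives that $\pi\circ\Gamma_*$ and $\widetilde{(-)}$ are mutually quasi-inverse equivalences between $\coh X$ and $\qgr A$.
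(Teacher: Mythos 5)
The paper offers no proof of this statement at all---it is quoted verbatim from Serre (\cite{Se}, Proposition 7.8)---so there is no internal argument to compare yours against; what you have written is the classical proof (sheafification $M\mapsto\widetilde M$ as quasi-inverse, $\alpha_{\mathcal F}\colon\widetilde{\Gamma_*(\mathcal F)}\to\mathcal F$ an isomorphism by Serre's local description of sections over $D_+(f)$, and $\ker\beta_M$, $\coker\beta_M$ torsion), and as a sketch it is correct. The only step carrying real content is the high-degree bound for $\coker\beta_M$, and your reduction via a finite presentation of $M$ to comparing $A$ with $\Gamma_*(\mathcal O_X)$, using Serre vanishing for the coherent sheaf of relations and the noetherian hypothesis on $k$, is exactly how the cited classical argument runs; for completeness you should also remark explicitly that $\pi\Gamma_*(\mathcal F)$ is a noetherian object of $\QGr A$ (it is isomorphic to $\pi(M)$ for a finitely generated $M\subset\Gamma_*(\mathcal F)$ with $\widetilde M\cong\mathcal F$), so that the functor really lands in $\qgr A$.
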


Since the categories $\QGr A$ and $\qgr A$ are also available when $A$ is not commutative, this observation provides a way to make a definition in a more general setting. Let $A$ be a right noetherian $\mathbb N$-graded $k$-algebra, let $\QGr A$ be the quotient category introduced above and let $\pi:\Gr A\map\QGr A$ be the quotient functor. The associated \emph{noncommutative projective scheme} is the pair $\Proj A=(\QGr A,\mathcal A)$ where $\mathcal A=\pi(A).$ The subcategory $\Tors A$ is stable under the degree-shift functor because $M$ is a torsion module if and only if $M[1]$ is. Hence, by the universal property of quotient categories (see \cite{G}, Corollaire III.2) there is an induced automorphism $s$ of $\QGr A$ defined by the equality $s\circ\pi=\pi\circ[1].$ We call $s$ the \emph{twisting functor} of $\Proj A.$ Given an object $\mathcal M$ of $\QGr A,$ we will often write $\mathcal M[i]$ instead of $s^{i}(\mathcal M).$

There is a representing functor $\Gamma:\QGr A\map\Gr A$ constructed as follows. For an object $\mathcal M$ in $\QGr A,$ we define
$$\Gamma(\mathcal M)=\bigoplus_{i\in\mathbb Z}\Hom_{\QGr A}\left(\mathcal A,\mathcal M[i]\right).$$
If $a\in\Gamma(\mathcal A)_i=\Hom_{\QGr A}(\mathcal A,\mathcal A[i]),$ $b\in\Gamma(\mathcal A)_j=\Hom_{\QGr A}(\mathcal A,\mathcal A[j])$ and $m\in\Gamma(\mathcal M)_d=\Hom_{\QGr A}(\mathcal A,\mathcal M[d]),$ we define multiplications by
$$ab=s^j(a)\circ b \quad\text{and}\quad ma=s^i(m)\circ a.$$
With this law of composition, $\Gamma(\mathcal A)$ becomes a graded $k$-algebra and $\Gamma(\mathcal M)$ a graded right $\Gamma(\mathcal A)$-module. Moreover there is a homomorphism $\varphi:A\map\Gamma(\mathcal A)$ of graded $k$-algebras sending $a\in A_i$ to $\pi(\lambda_a)\in\Gamma(\mathcal A)_i,$ where $\lambda_a:A\map A$ is left multiplication by $a.$ So each $\Gamma(\mathcal M)$ has a natural graded right $A$-module structure, and it is clear that $\Gamma$ defines a functor from $\QGr A$ to $\Gr A.$ The following lemma summarizes some important properties of the representing functor $\Gamma.$

\begin{lemma}\label{lemma1}
Let $\Proj A$ be a noncommutative projective scheme. Then the following statements hold:
\begin{itemize}
\item[(i)] The representing functor $\Gamma:\QGr A\map\Gr A$ is isomorphic to the section functor $\sigma:\QGr A\map\Gr A;$
\item[(ii)] $\Gamma$ is fully faithful;
\item[(iii)] $\pi\Gamma\cong\id_{\QGr A}.$
\end{itemize}
\end{lemma}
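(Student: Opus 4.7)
The three statements are tightly intertwined: (i) is the substantive claim, while (ii) and (iii) will follow from (i) together with general facts about Gabriel localization. I would therefore concentrate on (i) first.

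For (i), I would fix $\mathcal M\in\QGr A$ and exhibit a natural graded $A$-module isomorphism $\sigma(\mathcal M)\cong\Gamma(\mathcal M)$ by matching degrees. For any graded right $A$-module $N$ one has $N_i\cong\Hom_{\Gr A}(A[-i],N)$, so using the adjunction $\pi\dashv\sigma$ and the defining relation $s\pi=\pi\circ[1]$ of the twisting functor (hence $\pi(A[-i])=\mathcal A[-i]$ and the auto-equivalence $s$ gives $\Hom_{\QGr A}(\mathcal A[-i],\mathcal M)\cong\Hom_{\QGr A}(\mathcal A,\mathcal M[i])$), I obtain
$$\sigma(\mathcal M)_i\cong\Hom_{\Gr A}(A[-i],\sigma\mathcal M)\cong\Hom_{\QGr A}(\pi(A[-i]),\mathcal M)\cong\Hom_{\QGr A}(\mathcal A,\mathcal M[i])=\Gamma(\mathcal M)_i.$$
Summing over $i$ yields a natural $k$-module isomorphism $\sigma\cong\Gamma$.

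The content that remains in (i) is to verify that this degreewise identification respects the $A$-module structures. On $\sigma(\mathcal M)$ the $A$-action is induced from right multiplication $\rho_a\colon A[-i-j]\to A[-i]$, whereas on $\Gamma(\mathcal M)$ it is given by the rule $m\cdot a=s^i(m)\circ\pi(\lambda_a)$ via the ring map $\varphi\colon A\to\Gamma(\mathcal A)$. I would verify compatibility by tracing an element $a\in A_j$ and an $m\in\Gamma(\mathcal M)_i$ through both constructions, using that $\pi$ applied to $\lambda_a\colon A[-j]\to A$ coincides under the above adjunction identifications with the corresponding morphism $\mathcal A[-j]\to\mathcal A$, and that $s\pi=\pi[1]$. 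This is the main obstacle in the whole lemma: the naturality diagrams commute essentially by the definition of the twisting functor, but the notational bookkeeping between left/right multiplications, shifts, and applications of $s$ has to be handled carefully.

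For (iii), once (i) is in hand, it suffices to show $\pi\sigma\cong\id_{\QGr A}$. This is a standard property of the quotient of an abelian category by a localizing (equivalently, dense and stable under essential extensions) subcategory: the counit $\pi\sigma\to\id_{\QGr A}$ of the adjunction $\pi\dashv\sigma$ is a natural isomorphism, as spelled out in \cite{G} and \cite{Po}. Combining with (i) gives $\pi\Gamma\cong\pi\sigma\cong\id_{\QGr A}$. For (ii), I invoke the general categorical fact that a right adjoint $G$ of $F$ is fully faithful if and only if the counit $FG\to\id$ is an isomorphism; the content of (iii) is exactly this assertion for $F=\pi$, $G=\sigma$, so $\sigma$ is fully faithful, and by (i) so is $\Gamma$.
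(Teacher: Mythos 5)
Your proposal is correct, but for part (i) it takes a genuinely different route from the paper. The paper does not work degreewise with the known adjunction $\pi\dashv\sigma$; instead it applies the graded Watts theorem of Yekutieli--Zhang to the contravariant left exact functor $\Hom_{\QGr A}(\pi(-),\mathcal M)$, obtaining a natural isomorphism $\Hom_{\QGr A}(\pi(-),\mathcal M)\cong\Hom_{\Gr A}(-,\Gamma(\mathcal M))$; this exhibits $\Gamma$ itself as a right adjoint of $\pi$, and $\Gamma\cong\sigma$ then follows from uniqueness of right adjoints. What that buys is that the graded $A$-module structure on the representing object and the naturality in both variables are supplied by the cited theorem, so the bookkeeping you correctly single out as the main obstacle (compatibility of the degreewise identification $\sigma(\mathcal M)_i\cong\Hom_{\QGr A}(\mathcal A,\mathcal M[i])$ with the $A$-actions) is absorbed into the reference; the only extra care the paper takes is to observe that the Watts theorem of Yekutieli--Zhang, stated for $\gr A$, applies on all of $\Gr A$ because $\Hom_{\QGr A}(\pi(-),\mathcal M)$ converts direct sums into direct products. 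Your route avoids Watts entirely at the price of carrying out that verification by hand, which is feasible; note, however, one slip in your description of it: for right modules over a noncommutative $A$, the morphism $A[-i-j]\to A[-i]$ that induces the action of $a\in A_j$ is \emph{left} multiplication $\lambda_a$ (right multiplication $\rho_a$ is not a map of right $A$-modules), which is precisely why the module structure on $\Gamma(\mathcal M)$ is defined through $\pi(\lambda_a)$. For (ii) and (iii), your argument (the counit of $\pi\dashv\sigma$ is an isomorphism for a localizing subcategory, and a right adjoint is fully faithful iff the counit is an isomorphism) is exactly the content of the results of Gabriel that the paper simply cites.
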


\begin{proof}
(i) Given an object $\mathcal M$ of $\QGr A,$ we let $F$ denote the contravariant left exact functor $\Hom_{\QGr A}\left(\pi(-),\mathcal M\right)$ from $\Gr A$ to the category of $k$-modules. It follows from Watt's Theorem (see \cite{YZ}, Theorem 1.3) that $F\cong\Hom_{\Gr A}\left(-,\ul{F}(A)\right),$ where
\begin{equation*}
\ul{F}(A)=\bigoplus_{i\in\mathbb Z}\Hom_{\QGr A}\left(\pi\left(A[-i]\right),\mathcal M\right)\cong\bigoplus_{i\in\mathbb Z}\Hom_{\QGr A}\left(\pi\left(A\right),\mathcal M[i]\right)=\Gamma(\mathcal M).
\end{equation*}
Note that the original version of \cite{YZ}, Theorem 1.3 holds for functors from $\gr A$ to the category of $k$-modules. But in our situation where the functor $F$ converts direct sums into direct products (quotient functors preserve direct sums), the restriction to noetherian objects can be omitted. Hence we obtain a natural isomorphism
$$\Hom_{\QGr A}\left(\pi(-),\mathcal M\right)\cong\Hom_{\Gr A}\left(-,\Gamma(\mathcal M)\right).$$
This is true for every object $\mathcal M$ of $\QGr A,$ thus $\Gamma$ is right adjoint to the quotient functor $\pi,$ whence $\Gamma\cong\sigma$ as claimed.

The statements (ii) and (iii) follow from the respective properties of the section functor $\sigma,$ c.f. \cite{G}, Lemme III.2.1, Lemme III.2.2, and Proposition III.2.3.a.
\end{proof}

Being defined in terms of natural isomorphisms between Ext groups, also for noncommutative projective schemes there is a well-defined notion of Serre duality.  Here we only want to give the relevant definitions; for more details we refer to \cite{YZ}, \cite{J}, and \cite{RV}.

The \emph{cohomological dimension} of a noncommutative projective scheme $\Proj A$ is defined to be
$$\cd(\Proj A)=\max\{i\mid\Ext_{\QGr A}^i(\mathcal A,\mathcal M)\neq 0\text{ for some }\mathcal M\in\QGr A\}.$$
Following Yekutieli and Zhang \cite{YZ}, we say that a noncommutative projective scheme $\Proj A$ of cohomological dimension $d<\infty$ \emph{has a dualizing object,} if there exists an object $\omega$ in $\qgr A$ and a natural isomorphism
\begin{equation}\label{omega}
\theta:\Ext_{\qgr A}^d(\mathcal A,-)^\vee\Map\Hom_{\qgr A}(-,\omega).
\end{equation}
Here the dual is taken in the category of $k$-modules. Clearly, the dualizing object is unique, up to isomorphism, if it exists. Furthermore, if $\Proj A$ has a dualizing object $\omega,$ then for each $0\le i\le d=\cd(\Proj A),$ there is a natural transformation
$$\theta^{i}:\Ext_{\qgr A}^i(-,\omega)\Map\Ext_{\qgr A}^{d-i}(\mathcal A,-)^\vee,$$
where $\theta^0$ is the inverse of the natural isomorphism in (\ref{omega}). We say that $\Proj A$ satisfies \emph{Serre duality} if $\theta^i$ are  isomorphisms for all $i.$

Next, we are going to introduce invertible objects. Let $X$ be a projective scheme. Recall that a coherent $\mathcal O_X$-module $\mathcal L$ is called invertible if $\mathcal L\otimes\mathcal L^\vee\cong\mathcal O_X.$ This means that the functor $\mathcal L\otimes -$ is an autoequivalence of the category of coherent sheaves on $X.$ Moreover
$$\mathcal L[1]=\mathcal L\otimes\mathcal O_X[1]\cong\mathcal O_X[1]\otimes\mathcal L.$$
Hence the autoequivalence $\mathcal L\otimes -$ commutes, up to natural isomorphism, with the twisting functor $\mathcal O_X[1]\otimes -.$ This motivates

\begin{definition}
Let $\Proj A$ be a noncommutative projective scheme. An object $\mathcal L$ of $\qgr A$ is called \emph{invertible} if there exists an autoequivalence $t$ of $\qgr A$ such that $t(\mathcal A)\cong\mathcal L$ and which commutes with the twisting functor $s,$ i.e. $s\circ t\cong t\circ s.$
\end{definition}

Invertible objects are characterized in

\begin{theorem}\label{invertible}
Let $\Proj A$ be a noncommutative projective scheme and let $\mathcal L$ be an object of $\qgr A.$ Then, $\mathcal L$ is invertible if and only if $\Gamma(\mathcal L)$ is a graded $A$-bimodule and $t'=\pi\left(\Gamma(-)\otimes_A\Gamma(\mathcal L)\right)$ is an autoequivalence of $\qgr A$ such that $t'(\mathcal A)\cong\mathcal L.$
\end{theorem}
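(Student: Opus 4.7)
The overall strategy is to treat the two implications separately: the reverse direction is essentially formal once one knows how tensor product and degree shift interact, whereas the forward direction requires constructing the bimodule structure on $\Gamma(\mathcal{L})$ by transport along $t$ and identifying $t'$ with $t$. For the reverse direction, I assume $\Gamma(\mathcal{L})$ is a graded bimodule and that $t'$ is an autoequivalence with $t'(\mathcal{A})\cong\mathcal{L}$; it remains only to verify $s\circ t'\cong t'\circ s$. This will follow by combining three elementary facts: $s\circ\pi=\pi\circ[1]$ (the definition of $s$), $\Gamma\circ s\cong[1]\circ\Gamma$ (immediate from $\Gamma(\mathcal{M})=\bigoplus_i\Hom_{\QGr A}(\mathcal{A},\mathcal{M}[i])$), and the general graded identity $(N\otimes_A\Gamma(\mathcal{L}))[1]\cong N[1]\otimes_A\Gamma(\mathcal{L})$.

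For the forward direction, I suppose $t$ is an autoequivalence of $\qgr A$ with $t(\mathcal{A})\cong\mathcal{L}$ and $s\circ t\cong t\circ s$, and proceed in three steps. First, I equip $\Gamma(\mathcal{L})$ with a left $A$-module structure by transport along $t$: for $a\in A_k$, applying $t$ to $\pi(\lambda_a)\in\Hom_{\qgr A}(\mathcal{A},\mathcal{A}[k])$ yields $t(\pi(\lambda_a)):\mathcal{L}\to t(\mathcal{A}[k])\cong\mathcal{L}[k]$, and for $g\in\Gamma(\mathcal{L})_j$ I set $a\cdot g=s^j(t(\pi(\lambda_a)))\circ g$. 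Associativity, unitality, and compatibility with the right action follow from $\lambda_a[l]\circ\lambda_b=\lambda_{ab}$ together with $s\circ t\cong t\circ s$. Second, using the same transport recipe I define a natural transformation $\eta_\mathcal{M}:\Gamma(\mathcal{M})\otimes_A\Gamma(\mathcal{L})\to\Gamma(t(\mathcal{M}))$ by $f\otimes g\mapsto s^j(t(f))\circ g$; applying $\pi$ and invoking Lemma \ref{lemma1}(iii) produces a natural transformation $\pi\eta:t'\to t$.

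The third and hardest step is showing that $\pi(\eta_\mathcal{M})$ is an isomorphism for every noetherian $\mathcal{M}$. For $\mathcal{M}=\mathcal{A}$, $\eta_\mathcal{A}$ coincides with the left-multiplication map $\Gamma(\mathcal{A})\otimes_A\Gamma(\mathcal{L})\to\Gamma(\mathcal{L})$, whose kernel and cokernel are torsion because they come from the torsion kernel and cokernel of $\varphi:A\to\Gamma(\mathcal{A})$ (the latter being torsion since $\pi(\varphi)$ is an isomorphism in $\QGr A$); shift compatibility then extends this to every $\mathcal{A}[n]$. The main obstacle is extending from shifts of $\mathcal{A}$ to an arbitrary noetherian $\mathcal{M}$: since $\Gamma$ is only left exact, right-exactness of $t'$ on $\qgr A$ is not automatic, so one cannot immediately apply the five lemma to a presentation $\mathcal{A}[n_1]^{r_1}\to\mathcal{A}[n_0]^{r_0}\to\mathcal{M}\to 0$. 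I plan to get around this by passing to $\QGr A$, where $\pi(-\otimes_A\Gamma(\mathcal{L}))$ is right exact and $\pi\Gamma\cong\id$, compare cokernels on both sides, and descend back to $\qgr A$; the resulting natural isomorphism $t'\cong t$ then lets $t'$ inherit the autoequivalence property from $t$, finishing the proof.
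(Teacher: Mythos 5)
Your reverse direction and your construction of the bimodule structure on $\Gamma(\mathcal L)$ match the paper's proof closely. The divergence is in how $t$ and $t'$ are identified, and this is where your argument has a genuine gap.

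The paper's route is to invoke Watts' Theorem (\cite{YZ}, Theorem 1.3): for any object $\mathcal E$ of $\QGr A$, both $F=\Hom_{\QGr A}(t\pi(-),\mathcal E)$ and $G=\Hom_{\QGr A}(\pi(-\otimes_A L),\mathcal E)$ are contravariant left exact functors on $\gr A$ converting direct sums to products, hence representable, and the representing objects $\underline F(A)$ and $\underline G(A)$ are computed directly from the values on $A[-i]$ and are isomorphic because $\pi(L)\cong t(\mathcal A)$. Yoneda then gives $t\pi\cong\pi(-\otimes_A L)$, and precomposing with $\Gamma$ yields $t\cong t'$. This completely sidesteps any question about kernels, cokernels, or exactness of $t'$.

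Your alternative — building the natural transformation $\eta_{\mathcal M}\colon\Gamma(\mathcal M)\otimes_A\Gamma(\mathcal L)\to\Gamma(t(\mathcal M))$ explicitly and showing $\pi(\eta_{\mathcal M})$ is an isomorphism — hits a real obstruction already at $\mathcal M=\mathcal A$. You assert that the kernel of the multiplication map $\Gamma(\mathcal A)\otimes_A L\to L$ is torsion "because it comes from" the torsion kernel and cokernel of $\varphi\colon A\to\Gamma(\mathcal A)$. But that kernel sits inside $\coker(\varphi)\otimes_A L$, and the right $A$-action on this tensor product comes from $L$, not from $\coker(\varphi)$. So even though $\coker(\varphi)$ is right torsion, there is no automatic reason for $\coker(\varphi)\otimes_A L$ to be right torsion; one would need additional structural input on $L$ (e.g., that $L$ is finitely generated as a left $A$-module and that left multiplication shifts degrees suitably), and none of that is established at this stage. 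Your second remedy — passing to $\QGr A$, comparing cokernels, and descending — is stated at a level of vagueness that does not clearly resolve this; in effect you would need to reprove the content of Watts' Theorem by hand. The Watts-theorem route is exactly the tool that makes this comparison painless, and I would recommend taking it rather than trying to control torsion of $\operatorname{tors}\otimes_A L$ directly.
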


\begin{proof}
Suppose that $L=\Gamma(\mathcal L)$ is a graded $A$-bimodule and that $t'=\pi\left(\Gamma(-)\otimes_AL\right)$ is an autoequivalence of $\qgr A$ such that $t'(\mathcal A)\cong\mathcal L.$ In order to prove that $\mathcal L$ is invertible, it remains to show that $t'$ commutes with the twisting functor $s.$ We have
$$\Gamma(s(-))\otimes_AL=\Gamma(-)[1]\otimes_AL\cong\left(\Gamma(-)\otimes_AL\right)[1],$$
where the last isomorphism is a basic property of the graded tensor product. This leads to
$$t'\circ s=\pi\left(\Gamma\left(s(-)\right)\otimes_AL\right)\cong\pi\left(\left(\Gamma(-)\otimes_AL\right)[1]\right)=\left(\pi\left(\Gamma(-)\otimes_AL\right)\right)[1]=s\circ t'.$$

To prove the other implication, we suppose that $\mathcal L$ is invertible and that $t$ is an autoequivalence of $\qgr A$ such that $t(\mathcal A)\cong\mathcal L.$ Firstly, we show that $L=\Gamma(\mathcal L)$ is a graded $A$-bimodule. As we do not want to overload the notation, we assume that $t\circ s=s\circ t;$ if the natural isomorphism $\epsilon:t\circ s\map s\circ t$ is not the identity, the forthcoming arguments work as well, unless one always has to take $\epsilon$ into account.

If $a\in\Gamma(\mathcal A)_i=\Hom_{\QGr A}(\mathcal A,\mathcal A[i]),$ $b\in\Gamma(\mathcal A)_j=\Hom_{\QGr A}(\mathcal A,\mathcal A[j])$ and $x\in L_d=\Hom_{\QGr A}(\mathcal A,t(\mathcal A)[d]),$ we define multiplications by
$$ax=t(s^d(a))\circ x\text{ and }xb=s^j(x)\circ b.$$
Since $t\circ s=s\circ t,$ these operations induce a graded $\Gamma(\mathcal A)$-bimodule - and particularly also a graded $A$-bimodule - structure on $L.$

Secondly we are going to prove that the functors $t$ and $t'=\pi\left(\Gamma(-)\otimes_AL\right)$ from $\gr A$ to $\qgr A$ are isomorphic. Let $\mathcal E$ be any object of $\QGr A.$ Since both functors
$$F=\Hom_{\QGr A}\left(t\circ\pi(-),\mathcal E\right)\quad\text{and}\quad G=\Hom_{\QGr A}\left(\pi\left(-\otimes_AL\right),\mathcal E\right)$$
are contravariant left exact functors from $\gr A$ to Mod-$k,$ we may apply Watts' Theorem for $\gr A$ (\cite{YZ}, Theorem 1.3) to conclude
$$F\cong\Hom_{\Gr A}(-,\underline{F}(A))\text{ and }G\cong\Hom_{\Gr A}(-,\underline{G}(A)).$$
Note
$$\underline{F}(A)=\bigoplus_{i\in\mathbb Z}\Hom_{\QGr A}\left(t\circ\pi(A[-i]),\mathcal E\right)\cong\bigoplus_{i\in\mathbb Z}\Hom_{\QGr A}\left(t(\mathcal A)[-i],\mathcal E\right)$$
and
$$\underline{G}(A)=\bigoplus_{i\in\mathbb Z}\Hom_{\QGr A}\left(\pi\left(A[-i]\otimes_AL\right),\mathcal E\right)\cong\bigoplus_{i\in\mathbb Z}\Hom_{\QGr A}\left(\pi(L)[-i],\mathcal E\right).$$
Recall $L=\Gamma(\mathcal L)\cong\Gamma\left(t(\mathcal A)\right)$ and $\pi\circ\Gamma\cong\id_{\qgr A}$ (Lemma \ref{lemma1}(iii)), which shows $\pi(L)\cong t(\mathcal A).$ So, $\ul{F}(A)\cong\ul{G}(A)$ and thus $F\cong G.$ As this holds for every object $\mathcal E$ of $\QGr A,$ the functors  $t\circ\pi$ and $\pi(-\otimes_AL)$ are isomorphic. Therefore $t\cong t\pi\Gamma\cong\pi\left(\Gamma(-)\otimes_AL\right)=t',$ which completes the proof of the theorem.
\end{proof}

At the end of this section we are concerned with base change. Let $K$ be a $k$-algebra, let $A$ be an $\mathbb N$-graded $k$-algebra, and let $A_K$ denote the $\mathbb N$-graded $k$-algebra $A\otimes_kK.$ If $K$ is a flat $k$-module, then the functor $-\otimes_kK:\Gr A\map\Gr A_K$ is exact. The quotient functor $\pi_K:\Gr A_K\map\QGr A_K$ is also exact, so is their composition. Clearly, if a graded right $A$-module $M$ is torsion, the graded right $A_K$-module $M_K=M\otimes_kK$ is torsion as well. Therefore $\pi_K(M_K)=0$ whenever $M$ is torsion. Hence the universal property of quotient categories, see \cite{G}, Corollaire III.2, ensures the existence of a unique functor $F:\QGr A\map\QGr A_K$ such that $\pi_K\circ(-\otimes_kK)=F\circ\pi,$ where, as usual, $\pi$ denotes the quotient functor from $\Gr A$ to $\QGr A.$ Given an object $\mathcal M$ of $\QGr A,$ we will usually write $\mathcal M_K$ instead of $F(\mathcal M).$ With this notation, the following base change lemma holds:

\begin{lemma}\label{basechange}
Suppose that $A$ and $A_K$ are right noetherian and that $K$ is a flat $k$-module. Let $\mathcal M$ and $\mathcal N$ be two objects of $\QGr A.$ If $\mathcal N$ is noetherian, then for all $i\ge 0,$
$$\Ext_{\QGr A_K}^i(\mathcal N_K,\mathcal M_K)\cong\Ext_{\QGr A}^i(\mathcal N,\mathcal M)\otimes_kK.$$
\end{lemma}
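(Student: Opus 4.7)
The strategy is to reduce the computation of $\Ext$ in $\QGr A$ to an $\Ext$-computation in the ambient category $\Gr A$, where flat base change is standard when the first argument is finitely presented, and then to verify that the reduction itself is compatible with base change. Since $\mathcal N$ is noetherian, I choose $N \in \gr A$ with $\pi(N) \cong \mathcal N$ and some $M \in \Gr A$ with $\pi(M) \cong \mathcal M$; by construction of the base-change functor $F$, one then has $\pi_K(N_K) \cong \mathcal N_K$ and $\pi_K(M_K) \cong \mathcal M_K$. Because $A$ is right noetherian, $N$ admits a resolution $P_\bullet \to N$ by finitely generated graded free $A$-modules, and since $K$ is flat over $k$ the complex $P_\bullet \otimes_k K \to N_K$ remains a resolution in $\Gr A_K$ by finitely generated graded free $A_K$-modules.

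Using the adjunction $\Hom_{\QGr A}(\pi(-),\mathcal M) \cong \Hom_{\Gr A}(-,\Gamma(\mathcal M))$ from Lemma \ref{lemma1} — together with the fact that $\Gamma = \sigma$ preserves injectives (its left adjoint $\pi$ is exact) — I set up a Grothendieck spectral sequence
\begin{equation*}
E_2^{p,q} = \Ext^p_{\Gr A}(N, R^q\sigma(\mathcal M)) \;\Longrightarrow\; \Ext^{p+q}_{\QGr A}(\mathcal N, \mathcal M),
\end{equation*}
and an entirely parallel one for $A_K$ with $N_K$, $\mathcal M_K$, $\sigma_K$ in place of $N$, $\mathcal M$, $\sigma$. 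Base-change compatibility of the two spectral sequences then reduces the theorem to two independent statements: flat base change for $\Ext^p_{\Gr A}(N,-)$, and flat base change for the higher section functors $R^q\sigma$.

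The first of these is elementary: the isomorphism $\Hom_{\Gr A_K}(P \otimes_k K, L_K) \cong \Hom_{\Gr A}(P,L) \otimes_k K$ for finitely generated graded free $P$, combined with exactness of $-\otimes_k K$ and the resolution $P_\bullet \to N$, gives $\Ext^p_{\Gr A_K}(N_K, L_K) \cong \Ext^p_{\Gr A}(N,L)\otimes_k K$ for every $L \in \Gr A$. The genuine obstacle is the second statement, $R^q\sigma(\mathcal M)\otimes_k K \cong R^q\sigma_K(\mathcal M_K)$: specializing the lemma to $\mathcal N = \mathcal A$ shows it is essentially equivalent to the very claim we want to prove, so a direct approach is required. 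I would attack it through the description $R^q\sigma(\mathcal M) \cong \varinjlim_d \underline{\Ext}^{q+1}_{\Gr A}(A/A_{\geq d}, M)$ (with an analogous expression in degree zero), available in the Artin--Zhang framework: each $A/A_{\geq d}$ is finitely presented over $A$, so the elementary flat base change from the first step applies termwise, and $-\otimes_k K$ commutes with the filtered colimit in $d$. Once this is in place, the two spectral sequences are isomorphic after tensoring the first with $K$, and comparing abutments yields the desired isomorphism.
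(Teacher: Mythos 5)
Your argument is sound in outline, but it takes a genuinely more roundabout route than the paper's, and the two share their essential ingredients. The paper's proof rests on one tool: \cite{AZ}, Proposition 7.2(1), which for a \emph{noetherian} first argument $N$ gives $\underline{\Ext}^i_{\QGr A}(\mathcal N,\mathcal M)\cong\varinjlim_n\underline{\Ext}^i_{\Gr A}(N_{\ge n},M)$ --- i.e.\ precisely the kind of filtered-colimit presentation you invoke for $R^q\sigma$, except that it is available for an arbitrary noetherian $N$, not just for $A$. The paper applies it over $A$ and over $A_K$ (using $(N_K)_{\ge n}=(N_{\ge n})_K$), identifies graded with ungraded Ext for finitely generated modules (\cite{NV}, Prop.\ I.2.12), applies the flat change-of-rings isomorphism $\Ext^i_{A_K}\left((N_{\ge n})_K,M_K\right)\cong\Ext^i_A(N_{\ge n},M)\otimes_kK$ (\cite{Re}, Thm.\ (2.39)), and commutes $-\otimes_kK$ past the colimit; taking degree-zero parts finishes the proof in one stroke. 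Your proof uses the same two ingredients --- the Artin--Zhang colimit formula (in the special case $N=A$, for your step on $R^q\sigma$) and elementary flat base change for finitely generated modules over a right noetherian ring (your free-resolution step) --- but interposes the Grothendieck spectral sequence for $\Hom_{\Gr A}(N,-)\circ\sigma$ to bootstrap from $\mathcal A$ to a general noetherian $\mathcal N$. That layer is legitimate (first quadrant, $\sigma$ preserves injectives since $\pi$ is exact), but it is also dispensable, and it is the one place where your write-up stays informal: the morphism of spectral sequences lying over the base-change comparison maps (built from natural transformations such as $R^q\sigma(-)\otimes_kK\Rightarrow R^q\sigma_K\left((-)_K\right)$, compatibly with the abutments) is asserted rather than constructed, and spelling it out costs more than the direct argument it replaces. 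Once you notice that the colimit formula you use for $R^q\sigma$ is stated by Artin--Zhang for every noetherian $N$, your two reduction steps collapse into the paper's one-step proof; what your version buys in exchange is a cleaner conceptual separation of the problem into base change in $\Gr A$ plus base change for the (derived) section functor, which could be useful if one wanted the statement for non-noetherian $\mathcal N$ with $R^q\sigma$-finiteness hypotheses instead.
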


\begin{proof}
Let $M$ and $N$ be graded right $A$-modules such that $\mathcal M=\pi(M)$ and $\mathcal N=\pi(N)$ respectively. Since $\qgr A$ is a quotient category of the noetherian objects of $\Gr A,$ we may assume that $N$ is noetherian. Then \cite{AZ}, Proposition 7.2(1), asserts that for all $i\ge 0,$
\begin{equation}\label{bc1}
\underline{\Ext}_{\QGr A}^i(\mathcal N,\mathcal M)\cong\varinjlim_{n}\underline{\Ext}_{\Gr A}^i(N_{\ge n},M).
\end{equation}
Recall $\mathcal N_K=\pi_K(N_K)$ and $\mathcal M_K=\pi_K(M_K).$ Since $N$ is noetherian, the graded right $A_K$-module $N_K$ is finitely generated too, and $A_K$ is right noetherian by assumption, so we may apply \cite{AZ}, Proposition 7.2(1), once again to conclude that for all $i\ge 0,$
\begin{equation}\label{bc2}
\underline{\Ext}_{\QGr A_K}^i(\mathcal N_K,\mathcal M_K)\cong\underset{n}{\underrightarrow{\lim}}\underline{\Ext}_{\Gr A_K}^i\left((N_K)_{\ge n},M_K\right)=\underset{n}{\underrightarrow{\lim}}\underline{\Ext}_{\Gr A_K}^i\left((N_{\ge n})_K,M_K\right).
\end{equation}
But \cite{NV}, Proposition I.2.12, tells us that $\underline{\Ext}_{\Gr A_K}^i\left((N_{\ge n})_K,M_K\right)=\Ext_{A_K}^i\left((N_{\ge n})_K,M_K\right).$ Since $K$ is flat over $k,$ it follows from a change of ring theorem, cf. \cite{Re}, Theorem (2.39), that for all $i$ and $n,$
\begin{equation}\label{bc3}
\Ext_{A_K}^i\left((N_{\ge n})_K,M_K\right)\cong\Ext_{A}^i(N_{\ge n},M)\otimes_kK.
\end{equation}
Combining (\ref{bc1}), (\ref{bc2}), (\ref{bc3}) and the fact that the tensor product commutes with direct limits, leads to
$$\underline{\Ext}_{\QGr A_K}^i(\mathcal N_K,\mathcal M_K)\cong\underline{\Ext}_{\QGr A}^i(\mathcal N,\mathcal M)\otimes_kK.$$
Taking degree zero parts yields the claim.
\end{proof}

\section{Noncommutative arithmetic surfaces and arithmetic vector bundles}
\subsection{Noncommutative arithmetic surfaces: definition and examples}
Following Soul\'e \cite{So}, an arithmetic surface is a regular scheme $X,$ projective and flat over $\Spec\mathbb Z$ of Krull dimension two. We now isolate the conditions that we want to be satisfied by our noncommutative analogues of the homogeneous coordinate ring $S$ of $X$ and of $\coh(X),$ the category of coherent sheaves on $X.$ They clearly satisfy the following conditions:
\begin{itemize}
\item $S$ is noetherian and locally finite, and the associated graded real algebra $S_\mathbb R$ is noetherian too.
\item $\coh(X)$ is $H$-finite. In other words, for every coherent $\mathcal O_X$-modules $\mathcal F$ and all integers $i\ge 0,$ the cohomology groups $H^i(X,\mathcal F)$ are finitely generated;
\item $\coh(X)$ has cohomological dimension 1 in the sense that $H^i(X,\mathcal F)=0$ for every coherent $\mathcal O_X$-module $\mathcal F$ and all $i>1;$
\item $\coh(X)$ has a dualizing sheaf $\omega_X.$
\end{itemize}
This motivates

\begin{definition}
A \emph{noncommutative arithmetic surface} is a noncommutative projective scheme $\Proj A,$ which satisfies the following conditions:
\begin{itemize}
\item $A$ is an $\mathbb N$-graded right noetherian locally finite $\mathbb Z$-algebra such that the associated real algebra $A_\mathbb R=A\otimes_\mathbb Z\mathbb R$ is also right noetherian;
\item $\qgr A$ is H-finite and has cohomological dimension 1, i.e. for every object $\mathcal M$ in $\qgr A$ and all $i\ge 0,$ the Ext groups $\Ext_{\qgr A}^i(\mathcal A,\mathcal M)$ are finitely generated and $\Ext_{\qgr A}^i(\mathcal A,\mathcal M)=0$ whenever $i>1;$
\item $\qgr A$ has a dualizing object $\omega.$
\end{itemize}
\end{definition}

We are going to provide some examples of noncommutative arithmetic surfaces.

\begin{example}[Noncommutative arithmetic surfaces derived from commutative ones]
Let $X$ be a commutative arithmetic surface, let $\mathcal O$ be a coherent sheaf of $\mathcal O_X$-algebras, and let $\coh(\mathcal O)$ denote the category of coherent sheaves with a structure of right $\mathcal O$-module. We claim that $\coh(\mathcal O)$ is a noncommutative arithmetic surface. To verify this, we have to check the several conditions that are imposed in the definition of a noncommutative arithmetic surface.

Firstly, the homogeneous coordinate ring of $\coh(\mathcal O)$ is defined to be
$$B=\bigoplus_{n\in\mathbb N}H^0(X,\mathcal O[n]),$$
where $\mathcal O[n]=\mathcal O\otimes_{\mathcal O_X}\mathcal O_X[n].$ For every object $\mathcal M$ of $\coh(\mathcal O),$ we have $\Hom_{\mathcal O}(\mathcal O,\mathcal M)=H^0(X,\mathcal M),$ therefore the fact that $X$ $-$ or more precisely the triple $(\coh(\mathcal O_X),\mathcal O_X,$ \linebreak $-\otimes_{\mathcal O_X}\mathcal O_X[n])$ $-$ satisfies the conditions (H1), (H2)' and (H3) of Corollary 4.6 of \cite{AZ} implies that the triple  $\left(\coh(\mathcal O),\mathcal O,-\otimes_\mathcal O\mathcal O[n]\right)$ fulfils them as well. Hence \cite{AZ}, Corollary 4.6, tells us that $B$ is a right noetherian locally finite $\mathbb Z$-algebra and the pair $(\coh(\mathcal O),\mathcal O)$ is isomorphic to the noncommutative projective scheme $\Proj B.$ In particular, the categories $\coh(\mathcal O)$ and $\qgr B$ are equivalent. Since the homogeneous coordinate ring $S$ of $X$ is noetherian and $\mathcal O$ is a coherent $\mathcal O_X$-module, it follows that $B$ is a finitely generated $\mathbb N$-graded $S$-module. After tensoring with $\mathbb R,$ $S_\mathbb R$ is still noetherian and $B_\mathbb R$ finitely generated over $S_\mathbb R,$ so also
 $B_\mathbb R$ is right noetherian.

Secondly, let $M$ be a finitely generated graded right $B$-module and denote by $\mathcal M$ and $\mathcal M_\mathcal S$ the corresponding objects of $\qgr B$ and $\qgr S.$ Combining Proposition 3.11(3) and Theorem 8.3(2),(3) of \cite{AZ} shows that for all $i\ge 0,$
\begin{equation}\label{cohom}
\Ext_{\qgr B}^i(\mathcal B,\mathcal M)\cong\Ext_{\qgr S}^i(\mathcal S,\mathcal M_\mathcal S).
\end{equation}
On the other hand it follows from Serre's Theorem, see Theorem \ref{Serre}, that the categories $\qgr S$ and $\coh(X)$ are equivalent, therefore $\qgr S$ is $H$-finite and has cohomological dimension one. The isomorphism in (\ref{cohom}) shows that $\Proj B$ inherits these properties from $\qgr S.$

It remains to prove that $\coh(\mathcal O)$ has a dualizing object. If $\omega_X$ denotes the dualizing sheaf of $X,$ then there are natural isomorphism
$$\Ext_\mathcal O^1(\mathcal O,-)^\vee=H^1(X,-)^\vee\cong\Hom_{X}(-,\omega_X)$$
of functors from $\coh(\mathcal O)$ to the category of abelian groups. This implies that the $\mathcal Hom$ sheaf $\mathcal Hom_{\mathcal O_X}(\mathcal O,\omega_X)$ is a representing object of the functor $\Ext_\mathcal O^1(\mathcal O,-)^\vee$ and therefore a dualizing object of $\coh(\mathcal O).$

All together this shows that $\coh(\mathcal O)$ is indeed a noncommutative arithmetic surface. The noncommutative projective scheme $\coh(\mathcal O)$ is an example of what Artin and Zhang \cite{AZ} call classical projective schemes.
\end{example}

\begin{example}[Maximal orders]
We now specialize the last example a little bit. Let $K$ be the function field of a commutative arithmetic surface $X$ and let $\mathcal O$ be a sheaf of maximal $\mathcal O_X$-orders in a finite dimensional semisimple $K$-algebra $A.$ Then, the dualizing sheaf $\omega=\mathcal Hom_{\mathcal O_X}(\mathcal O,\omega_X)$ is an invertible $\mathcal O$-module. Since $\mathcal Hom_{\mathcal O_X}(\mathcal O,\omega_X)\cong\mathcal Hom_{\mathcal O_X}(\mathcal O,\mathcal O_X)\otimes_{\mathcal O_X}\omega_X$ and $\omega_X$ is an invertible sheaf, it suffices to show that $\widetilde{\mathcal O}=\mathcal Hom_{\mathcal O_X}(\mathcal O,\mathcal O_X)$ is an invertible $\mathcal O$-module. We prove this locally. So let $x\in X.$ Then $\widetilde{\mathcal O}_x\cong\Hom_{\mathcal O_{X,x}}(\mathcal O_x,\mathcal O_{X,x}).$ Since $\mathcal O_x$ is an $\mathcal O_{X,x}$-order in the semisimple $K$-algebra $A,$ it follows that $\widetilde{\mathcal O}_x$ is isomorphic to the inverse different $\left\{a\in A\mid tr_{A\mid K}\left(a\mathcal O_x\right)\subset\mathcal O_x\right\},$ which, according to \cite{Re}, Section 25, is an invertible $\mathcal O_x$-module because $\mathcal O_x$ is a maximal order in $A.$

In other words, may view $\widetilde{\mathcal O}$ as the canonical bundle of the extension $\coh(\mathcal O)\map X.$
\end{example}

\begin{example}[Projective lines]
Let $R$ be a ring and let $R[T_0,T_1]$ be the polynomial ring in two indeterminates over $R.$ If $R$ is finitely generated as abelian group, then $\Proj\left(R[T_0,T_1]\right)$ is a noncommutative arithmetic surface. We will show that it is a special case of Example 1.

To do so, we consider the projective line $X=\mathbb P_\mathbb Z^1$ over the integers. Then $\mathcal O=R\otimes_\mathbb Z\mathcal O_X$ is a coherent sheaf of $\mathcal O_X$-algebras and the homogeneous coordinate ring $B=\bigoplus_{n\in\mathbb N}H^0(X,\mathcal O[n])$ is isomorphic to $R[T_0,T_1].$ Indeed, since $X$ is noetherian, cohomology commutes with arbitrary direct sums, thus
$$B\cong H^0(X,\bigoplus_{n\in\mathbb N}\mathcal O[n])\cong H^0(X,R\otimes_\mathbb Z\bigoplus_{n\in\mathbb N}\mathcal O_X[n])\cong R\otimes_\mathbb Z\Gamma_+(\mathcal O_X),$$
where $\Gamma_+(\mathcal O_X)=\bigoplus_{n\in\mathbb N}H^0(X,\mathcal O_X[n])\cong\mathbb Z[T_0,T_1].$ This shows $B\cong R\otimes_\mathbb Z\mathbb Z[T_0,T_1]\cong R[T_0,T_1].$ So it follows from Example 1 that the categories $\coh(\mathcal O)$ and $\qgr\left(R[T_0,T_1]\right)$ are equivalent and that $\Proj\left(R[T_0,T_1]\right)$ is a noncommutative arithmetic surface.

In the polynomial ring $R[T_0,T_1]$ the indeterminates $T_0$ and $T_1$ lie in the center. Of course this is quite a strong restriction in a noncommutative setting, but fortunately it can be omitted by considering twisted polynomial rings. They are obtained as follows. Let $\sigma$ be a graded automorphism of the polynomial ring $R[T_0,T_1].$ A new multiplication on the underlying graded $R$-module $R[T_0,T_1]$ is defined by
$$p\ast q=p\sigma^d(q),$$
where $p$ and $q$ are homogeneous polynomials and $\deg(p)=d.$ The graded ring thus obtained is called twisted polynomial ring and is denoted by $R[T_0,T_1]^\sigma.$ Zhang \cite{Z}  proved that the categories $\Gr R[T_0,T_1]$ and $\Gr R[T_0,T_1]^\sigma$ are equivalent whence $\Proj\left(R[T_0,T_1]\right)\cong\Proj\left(R[T_0,T_1]^\sigma\right).$ So, twisted polynomial rings in two indeterminates over a noncommutative ring $R$ all define the same noncommutative arithmetic surface.
\end{example}

\begin{example}[Plane projective curves]
Let $R$ be a ring and let $p\in R[T_0,T_1,T_2]$ be a homogeneous normal polynomial of positive degree. Recall that an element of a ring is called normal if the principal ideal generated by this element is two-sided. This allows to form the factor ring $A=R[T_0,T_1,T_2]/(p).$ If $R$ is finitely generated as abelian group and $(p')=(p)\cap\mathbb Z[T_0,T_1,T_2]$ is a prime ideal of $\mathbb Z[T_0,T_1,T_2],$ then $\Proj A$ is a noncommutative arithmetic curve.

In order to prove this, we set $S=\mathbb Z[T_0,T_1,T_2]/(p').$ Then $X=\Proj S$ is an arithmetic surface. Moreover we have a natural homomorphism $\phi:S\map A$ of noetherian $\mathbb N$-graded $\mathbb Z$-algebras, and $A$ is finitely generated as $S$-module. Hence the sheaf $\widetilde A$ associated to $A$ is a coherent sheaf of $\mathcal O_X$-algebras, and according to Example 1, $\coh(\widetilde A)$ is a noncommutative arithmetic surface. If $B$ denotes the homogeneous coordinate ring of $\coh(\widetilde A),$ the categories $\qgr B$ and $\coh(\widetilde A)$ are equivalent. But the category $\qgr B$ is also equivalent to the category $\qgr A,$ which shows that $\Proj A$ is a noncommutative arithmetic surface.

As in Example 3, we may consider twisted algebras to omit the restriction that the indeterminates lie in the center.
\end{example}

\subsection{Arithmetic vector bundles}
Next, we are concerned with arithmetic vector bundles on noncommutative arithmetic surfaces. Recall that a hermitian vector bundle on an arithmetic variety $X$ is a pair $\ol E=(E,h)$ consisting of a locally free sheaf $E$ on $X$ and a \emph{smooth} hermitian metric on the holomorphic vector bundle $E_\mathbb C$ induced by $E$ on the associated complex algebraic variety $X_\mathbb C.$

Unfortunately we are not able to adapt this definition of a hermitian vector bundle to our noncommutative setting. The main problem is that we do not know what should be the differential structure on a noncommutative complex algebraic variety. This problem was also mentioned by other authors, for example Polishchuk \cite{P} writes: ``However, it is rather disappointing that at present there is almost no connection between noncommutative algebraic varieties over $\mathbb C$ and noncommutative topological spaces, which according to Connes are described by $C^*$-algebras.''

The lack of smooth hermitian metrics on vector bundles on noncommutative algebraic varieties forces us to substitute the infinite part of a hermitian vector bundle on an arithmetic variety. We replace the hermitian metric by an automorphism of the induced real bundle. In this manner, we obtain objects $\widetilde{E}=(E,\beta)$ consisting of a locally free sheaf $E$ on $X$ and an automorphism $\beta$ of the real vector bundle $E_\mathbb R$ induced by $E$ on the associated real algebraic variety $X_\mathbb R.$ Since the automorphism group of a finite dimensional real vector space $V$ and the group of nondegenerate $\mathbb R$-bilinear forms $V\times V\map\mathbb R$ are isomorphic, the pairs $\widetilde E$ and hermitian vector bundles are closely related. For example, if $L$ is an invertible $\mathbb Z$-module then $L_\mathbb R$ is a one-dimensional real vector space. Therefore every automorphism $\alpha$ of $L_\mathbb R$ is given by multiplication by some nonzero $r\in\mathbb R.$ Let $x$ be a basis of $L$ over $\mathbb Z$ and let $h_\alpha$ denote the scalar product on $L_\mathbb R$ which is defined by the equation $h_\alpha(x,x)=r^2.$ Note that $h_\alpha$ does not depend on the choice of the basis $x$ of $L$ because the only other basis of $L$ is $-x.$ This allows to associate in a well-defined manner the arithmetic line bundle $\ol L=(L,h_\alpha)$ to the pair $\widetilde{L}=(L,\alpha),$ and we may note the following

\begin{remark}
Let $\widetilde L=(L,\alpha)$ and $\widetilde M=(M,\beta)$ be two pairs consisting of an invertible $\mathbb Z$-module and an automorphism of the induced real vector space. The associated arithmetic line bundles $\ol L=(L,h_\alpha)$ and $\ol M=(M,h_\beta)$ are isomorphic if and only if $\left|\det\alpha\right|=\left|\det\beta\right|.$ Moreover, $\adeg\ol L=-\log\left|\det\alpha\right|.$
\end{remark}

So at least for arithmetic curves, we obtain essentially the same theory whether the infinite part is given by a scalar product or by an automorphism. Let us now consider hermitian vector bundles on arithmetic surfaces. If $X$ is an arithmetic surface, the associated complex variety $X_\mathbb C$ is a Riemann surface. It is possible to endow every line bundle $L$ with a distinguished hermitian metric $g.$ This metric is defined via the Green's function for the Riemann surface $X_\mathbb C$ and is therefore called the Green's metric. Already Arakelov \cite{A} observed that the Green's metric is admissible and that every other admissible metric $h$ on $L$ is a scalar multiple of the Green's metric, i.e. there is some positive real number $\alpha$ such that $h=\alpha g.$

On the other hand for every line bundle $L$ on the arithmetic surface $X,$ we have
$$\Hom_{X_\mathbb R}\left(L_\mathbb R,L_\mathbb R\right)\cong\Hom_{X_\mathbb R}\left(\mathcal O_{X_\mathbb R},\mathcal O_{X_\mathbb R}\right)\cong H^0\left(X_\mathbb R,\mathcal O_{X_\mathbb R}\right)\cong\mathbb R.$$
This shows that the set of admissible hermitian line bundles on $X$ embeds into the set of pairs $(L,\alpha)$ consisting of a line bundle and an automorphism of the real line bundle $L_\mathbb R.$

All these considerations motivate

\begin{definition}
An \emph{arithmetic vector bundle} on a noncommutative arithmetic surface $\Proj A$ is a pair $\ol{\mathcal E}=(\mathcal E,\beta)$ consisting of a noetherian object $\mathcal E$ of $\QGr A$ and an automorphism $\beta:\mathcal E_\mathbb R\map\mathcal E_\mathbb R.$ If $\mathcal E$ is an invertible object, then $\ol{\mathcal E}$ is called an \emph{arithmetic line bundle} on $\Proj A.$
\end{definition}

\section{Arithmetic intersection and Riemann-Roch theorem}
To compute the intersection of two arithmetic line bundles on a noncommutative arithmetic surface, we follow Faltings original approach involving the determinant of the cohomology, see \cite{F2}. Recall that the determinant of the cohomology of a vector bundle $E$ on an arithmetic surface $X$ is defined to be the invertible $\mathbb Z$-module
\begin{equation}\label{doc}
\lambda(E)=\det H^0(X,E)\otimes_\mathbb Z\left(\det H^1(X,E)\right)^{-1}.
\end{equation}
If the finitely generated abelian group $H^i(X,E)$ has nontrivial torsion part with cardinality $n>0$ then $\det H^i(E,X)=\frac{1}{n}\det H^i_f,$ where $H^i_f$ is the free part of  $H^i(X,E).$ Given two line bundles $L$ and $M$ on $X,$ one puts
\begin{equation}\label{inter}
\langle L,M\rangle=\lambda(L\otimes M)\otimes_\mathbb Z\lambda(L)^{-1}\otimes_\mathbb Z\lambda(M)^{-1}\otimes_\mathbb Z\lambda(\mathcal O_X).
\end{equation}
This is again an invertible $\mathbb Z$-module and its norm equals the intersection number of $L$ and $M.$

Now, let $\ol L$ and $\ol M$ be two hermitian line bundles on an arithmetic surface $X$ and let $\ol N=(N,h)$ be a hermitian line bundle on a Riemann surface $Y.$ Quillen \cite{Q} and Faltings \cite{F2} developed two different methods to endow the determinant of the cohomology $\lambda(N)$ with a hermitian metric which is naturally induced by the given metric $h.$ The two metrics differ by a constant which only depends on the Riemann surface $Y$ but is independent of the particular hermitian line bundle $\ol N,$ cf. \cite{So}, Section 4.4. However, both approaches have been used to equip the complex vector space $\langle L,M\rangle_\mathbb C$ with a suitable hermitian metric and thus obtaining a hermitian line bundle $\langle\ol L,\ol M\rangle$ on $\Spec\mathbb Z$ which, in view of (\ref{inter}), is a natural generalization of the intersection of $L$ and $M.$

In this vein, we are now going to define arithmetic intersection on noncommutative arithmetic surfaces. Let $\Proj A$ be a noncommutative arithmetic surface and fix some automorphism $\alpha$ of the dualizing object $\omega_\mathbb R.$ Let $\ol{\mathcal L}=(\mathcal L,\beta)$ be an arithmetic line bundle and $\ol{\mathcal E}=(\mathcal E,\gamma)$ be an arithmetic vector bundle on $\Proj A.$ Recall that $\beta$ induces homomorphisms
$$\beta_n^*:\Ext_{\qgr A_\mathbb R}^n(\mathcal L_\mathbb R,\mathcal E_\mathbb R)\map\Ext_{\qgr A_\mathbb R}^n(\mathcal L_\mathbb R,\mathcal E_\mathbb R),\quad n\ge 0.$$
Likewise $\gamma$ induces homomorphisms
$$(\gamma_*)_n:\Ext_{\qgr A_\mathbb R}^n(\mathcal L_\mathbb R,\mathcal E_\mathbb R)\map\Ext_{\qgr A_\mathbb R}^n(\mathcal L_\mathbb R,\mathcal E_\mathbb R),\quad n\ge 0.$$
We set $\beta^*=\beta_0^*$ and $\gamma_*=(\gamma_*)_0.$ To avoid ambiguities, we sometimes write $\Ext_{\qgr A_\mathbb R}^n(\beta,\mathcal E_\mathbb R)$ instead of $\beta_n^*$ and similarly for $(\gamma_*)_n.$

Inspired by the above considerations in the commutative case, we define two arithmetic line bundles on $\Spec\mathbb Z,$ namely
$$\det\Hom\left(\ol{\mathcal L},\ol{\mathcal E}\right)=\left(\det\Hom_{\qgr A}(\mathcal L,\mathcal E),\det\left((\beta^{-1})^*\circ\gamma_*\right)\right)$$
and
$$\det\Ext\left(\ol{\mathcal L},\ol{\mathcal E}\right)=\left(\det\Ext_{\qgr A}^1(\mathcal L,\mathcal E),\det\left((\beta^{-1})_1^*\circ(\gamma_*)_1\right)\det(\alpha_*)^{-1}\right),$$
where $\alpha_*=\Hom_{\qgr A_\mathbb R}\left(t_\mathbb R^{-1}(\mathcal E_\mathbb R),\alpha\right)$ and $t_\mathbb R$ is an automorphism of $\QGr A_\mathbb R$ associated to the invertible object $\mathcal L_\mathbb R.$ By definition of a noncommutative arithmetic surface, all the Ext groups occurring above are finitely generated. Moreover the base change lemma, Lemma \ref{basechange}, ensures that this remains true after tensoring with $\mathbb R,$ i.e. the Ext groups $\Ext_{\qgr A_\mathbb R}^n(\mathcal L_\mathbb R,\mathcal E_\mathbb R)$ are finite dimensional real vector spaces, therefore the determinants are reasonable and everything makes sense.

The expression $\det(\alpha_*)^{-1}$ occurs for the following reason. If $\ol{\mathcal A}=\left(\mathcal A,\id\right)$ is the trivial arithmetic bundle on $\Proj A,$ then
\begin{equation*}
\begin{split}
\left(\det\Ext(\ol{\mathcal A},\ol{\mathcal A})\right)^{-1}&=\left(\left(\det\Ext_{\qgr A}^1(\mathcal A,\mathcal A)\right)^{-1},\det(\alpha_*)\right)\\
&\cong\left(\det\Hom_{\qgr A}(\mathcal A,\omega),\det(\alpha_*)\right)\\
&=\det\Hom\left(\ol{\mathcal A},\ol{\omega}\right),
\end{split}
\end{equation*}
which is a natural condition that is imposed also in the definition of the Faltings metric, see \cite{F2}. In other words, the expression $\det(\alpha_*)$ is necessary in order that the natural isomorphism between Ext groups given by the dualizing sheaf induces an isomorphism of the corresponding arithmetic line bundles. For more details we refer to Theorem \ref{dual} below.

\begin{definition}
Let $\ol{\mathcal L}$ be an arithmetic line bundle and $\ol{\mathcal E}$ be an arithmetic vector bundle on a noncommutative arithmetic surface $\Proj A.$ We set
$$\lambda(\ol{\mathcal L},\ol{\mathcal E})=\det\Hom\left(\ol{\mathcal L},\ol{\mathcal E}\right)\otimes_\mathbb Z\left(\det\Ext\left(\ol{\mathcal L},\ol{\mathcal E}\right)\right)^{-1}.$$
If $\ol{\mathcal L}=\ol{\mathcal A}$ is the trivial arithmetic bundle, then we write $\lambda(\ol{\mathcal E})$ instead of $\lambda(\ol{\mathcal A},\ol{\mathcal E})$ and call it the \emph{determinant of the cohomology} of $\ol{\mathcal E}.$
\end{definition}

Recall that for every $\mathcal O_X$-module $\mathcal E$ on a commutative scheme $X$ and every $i\ge 0,$ the Ext group $\Ext_X^i(\mathcal O_X,\mathcal E)$ and the cohomology group $H^i(X,\mathcal E)$ are naturally isomorphic. Hence in view of (\ref{doc}), our definition of the determinant of the cohomology is a natural generalization of the definition in the commutative case.

In order to prove that the determinant of the cohomology is compatible with Serre duality, we need the following

\begin{lemma}\label{semisimple}
Let $A$ be a finite dimensional simple algebra over a field $k,$ and let $M$ be a finitely generated $A$-bimodule. Given $a\in A,$ let $\lambda_a$ and $\rho_a$ denote left and right multiplication by $a$ on $M,$ respectively. Then for every $a\in A,$
$$\textstyle{\det_k}(\lambda_a)=\det_k(\rho_a).$$
\end{lemma}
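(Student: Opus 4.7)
The plan is to decompose $M$ separately as a left and as a right $A$-module, reducing the assertion to computing $\det_k(\lambda_a)$ and $\det_k(\rho_a)$ on the unique simple left (resp.\ right) $A$-module, and then evaluating these via the reduced norm after base change to a splitting field.

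By Wedderburn's theorem, $A\cong M_n(D)$ for some finite-dimensional $k$-division algebra $D$. Let $V=D^n$ and $W=D^n$ denote the (unique up to isomorphism) simple left and right $A$-modules; they have the same $k$-dimension $n\dim_k D$. Since $A$ is simple, hence semisimple, $M$ decomposes as $M\cong V^{s}$ as a left $A$-module and as $M\cong W^{s}$ as a right $A$-module, with the same multiplicity $s=\dim_k M/(n\dim_k D)$. Each summand is $\lambda_a$-stable (resp.\ $\rho_a$-stable) and carries the same action, so
\[
\det\nolimits_k(\lambda_a)=\det\nolimits_k(\lambda_a|_V)^{s}\quad\text{and}\quad\det\nolimits_k(\rho_a)=\det\nolimits_k(\rho_a|_W)^{s},
\]
and it suffices to prove $\det_k(\lambda_a|_V)=\det_k(\rho_a|_W)$.

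For this, I would pass to a splitting field $K$ of $A$, for instance the algebraic closure of $k$. Since the determinant of a $k$-linear endomorphism is invariant under base change, $\det_k(\lambda_a|_V)=\det_K(\lambda_{a\otimes 1}|_{V\otimes_k K})$, and similarly for $\rho_a$ and $W$. Let $Z$ denote the center of $A$, and assume $Z/k$ is separable. Then $A\otimes_k K$ decomposes as a product $\prod_{\sigma\colon Z\hookrightarrow K}M_N(K)$ indexed by the $k$-embeddings of $Z$ into $K$ (with $N^{2}=\dim_Z A$), and correspondingly $V\otimes_k K$ and $W\otimes_k K$ become direct sums of copies of $K^N$ with the standard left (resp.\ right) $M_N(K)$-action. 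A direct matrix computation shows that left multiplication by any $b\in M_N(K)$ on column vectors and right multiplication by $b$ on row vectors both have $K$-determinant $\det_K(b)$. Assembling the contributions over the various $\sigma$ yields the common value $\N_{Z/k}(\mathrm{Nrd}_{A/Z}(a))^{d}$ on both sides, where $d$ is the Schur index of $D$.

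The main technical obstacle is the case where $Z/k$ is inseparable: then $A\otimes_k K$ is no longer semisimple and the product decomposition above breaks down, so a finer argument---for instance via the reduced trace applied to all powers of $a$ together with Newton's identities---would be required. For the arithmetic applications in the present paper, however, $k$ has characteristic zero, so $Z/k$ is automatically separable and the plan goes through as stated.
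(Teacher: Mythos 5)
Your argument is correct in substance (modulo the caveat you yourself flag) but takes a genuinely different route from the paper's. The paper exploits the bimodule structure directly: writing $K$ for the center of $A$, it uses $A\otimes_K A^{\circ}\cong M_r(K)$ to conclude that every such bimodule is a direct sum of copies of $A$, so the claim reduces to $M=A$, where $\det_k(\lambda_a)=N_{A\mid k}(a)=\det_k(\rho_a)$ by the theory of the regular representation (Reiner, Theorem (9.32)). You instead ignore any interaction between the two actions, decompose $M$ separately as a left and as a right module into copies of the simple module, and compute both determinants after base change to a splitting field, arriving at the common value $N_{Z/k}(\mathrm{Nrd}_{A/Z}(a))^{d}$. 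One payoff of your route is that it never uses that the left and right actions are compatible over the center: the paper's identification of $M$ with a module over $A\otimes_K A^{\circ}$ implicitly requires the center to act identically on the two sides, an assumption your argument simply does not need. The price is the separability issue you raise, plus one small point you should make explicit: to ``assemble the contributions over the various $\sigma$'' you need that each simple factor of $A\otimes_k\bar k$ occurs with the \emph{same} multiplicity (namely the Schur index $d$) in $V\otimes_k\bar k$ and in $W\otimes_k\bar k$; this follows from $D\otimes_k\bar k\cong\prod_\sigma M_d(\bar k)$, but it is used, not automatic.

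The inseparable case is a genuine gap relative to the lemma as stated (arbitrary field $k$), and the repair you sketch would not work: in characteristic $p$ Newton's identities do not recover the determinant from the reduced traces. The gap is easy to close, however, and in a way that also streamlines the separable case: $V$ and $W$ are $Z$-vector spaces and $\lambda_a$, $\rho_a$ are $Z$-linear, and $\det_k=N_{Z/k}\circ\det_Z$ by transitivity of norms/determinants, which holds for any finite extension $Z/k$, separable or not. Over its center $Z$ the algebra $A$ is central simple, hence split already by a separable extension $Z'$ of $Z$; your matrix computation carried out over $Z'$ then gives $\det_Z(\lambda_a|_V)=\mathrm{Nrd}_{A/Z}(a)^{d}=\det_Z(\rho_a|_W)$, and applying $N_{Z/k}$ finishes the proof over an arbitrary field, with no case distinction and no need to decompose over the embeddings $\sigma$. (For the application in the paper, where $k=\mathbb R$, your original version is of course already sufficient.)
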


\begin{proof}
Let $K$ denote the center of the simple $k$-algebra $A$ and $A^\circ$ its opposite ring. If $A$ has dimension $r$ over $K,$ then it follows from \cite{Re}, Theorem (7.13), that the enveloping algebra $A^e=A\otimes_KA^\circ$ is isomorphic to the algebra $M_r(K)$ of $r\times r$-matrices over the field $K.$ We may view every $A$-bimodule $N$ as a left $A^e$-module, by means of the formula
$$(a\otimes b^\circ)n=anb,\quad\text{for all }a\in A,b^\circ\in A^\circ,n\in N.$$
Since $A^e\cong M_r(K),$ every minimal left ideal $V$ of $A^e$ is isomorphic to the left $A$-module of $r$-component column vectors with entries in $K,$ and hence has dimension $r$ over $K.$ Moreover, every finitely generated left $A^e$-module is isomorphic to a finite direct sum of copies of $V.$ In particular, this applies to the left $A^e$-module $A,$ whence $A\cong V$ for dimensional reasons. This implies that there is a natural number $m$ such that $M\cong A^m$ as $A$-bimodules. Hence for every $a\in A,$ $\det_k(\lambda_a)=m\det_k(f_a)$ and $\det(\rho_a)=m\det_k(g_a),$ where $f_a:A\map A$ is left multiplication and $g_a:A\map A$ is right multiplication by $a.$ Finally, it follows from \cite{Re}, Theorem (9.32), that $\det_k(f_a)=N_{A\mid k}(a)=\det_k(g_a)$ which completes the proof.
\end{proof}

For further use, we note the following straightforward observation:

\begin{remark}\label{det}
Let $k$ be a field, let $F,G:\mathsf C\map\operatorname{mod}$-$k$ be two functors from any category $\mathsf C$ to the category of finite dimensional $k$-vector spaces, let $\eta:F\map G$ be a natural transformation, and let $A$ be an object of $\mathsf C.$ If $\eta_A$ is an isomorphism, then $\det F(f)=\det G(f)$ for every endomorphism $f:A\map A.$
\end{remark}

Now we are ready to prove

\begin{theorem}\label{dual}
Let $\ol{\mathcal L}$ be an arithmetic line bundle on a noncommutative arithmetic surface $\Proj A.$ Suppose that $\Proj A_\mathbb R$ satisfies Serre duality. If $\Hom_{\QGr A_\mathbb R}(\mathcal A_\mathbb R,\mathcal A_\mathbb R)$ is a simple ring, then the determinant of the cohomology is compatible with Serre duality, that is,
$$\lambda(\ol{\mathcal L},\ol{\omega})\cong\lambda(\ol{\mathcal L}).$$
\end{theorem}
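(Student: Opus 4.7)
The strategy is to combine the dualizing-object isomorphism $\theta^0$ (which holds over $\mathbb Z$) with Serre duality $\theta^1$ (available over $\mathbb R$ by hypothesis) to identify the underlying modules, and then use Lemma~\ref{semisimple} to reconcile the induced automorphisms, which do not transport directly because $\theta^0$ is natural only in the first variable.

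First, applying $\theta^0$ to $\mathcal L$ and $\theta^1$ to $\mathcal L_\mathbb R$ gives the natural isomorphisms $\Hom_{\qgr A}(\mathcal L,\omega)\cong\Ext_{\qgr A}^1(\mathcal A,\mathcal L)^\vee$ over $\mathbb Z$, and $\Ext_{\qgr A_\mathbb R}^1(\mathcal L_\mathbb R,\omega_\mathbb R)\cong\Hom_{\qgr A_\mathbb R}(\mathcal A_\mathbb R,\mathcal L_\mathbb R)^\vee$ over $\mathbb R$. Taking determinants identifies the underlying invertible $\mathbb Z$-modules of $\lambda(\ol{\mathcal L},\ol\omega)$ and $\lambda(\ol{\mathcal L})$. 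Next, Remark~\ref{det} applied to the naturality of $\theta^0$ in the endomorphism $\beta^{-1}\colon\mathcal L_\mathbb R\to\mathcal L_\mathbb R$ yields $\det((\beta^{-1})^*)=\det((\beta_*)_1)^{-1}$ comparing $\Hom(\mathcal L_\mathbb R,\omega_\mathbb R)$ with $\Ext^1(\mathcal A_\mathbb R,\mathcal L_\mathbb R)$, and similarly $\theta^1$ gives $\det((\beta^{-1})_1^*)=\det(\beta_*)^{-1}$ comparing $\Ext^1(\mathcal L_\mathbb R,\omega_\mathbb R)$ with $\Hom(\mathcal A_\mathbb R,\mathcal L_\mathbb R)$. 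Expanding the two automorphisms of the real determinants of $\lambda(\ol{\mathcal L},\ol\omega)$ and $\lambda(\ol{\mathcal L})$ and substituting, all $\beta$-terms cancel, so the proof reduces to the identity $\det_\mathbb R(\alpha_*^{(T)})=\det_\mathbb R((\alpha_*)_1)$, where $\alpha_*^{(T)}$ is postcomposition by $\alpha$ on $\Hom_{\qgr A_\mathbb R}(t_\mathbb R^{-1}(\omega_\mathbb R),\omega_\mathbb R)$ and $(\alpha_*)_1$ is postcomposition by $\alpha$ on $\Ext_{\qgr A_\mathbb R}^1(\mathcal L_\mathbb R,\omega_\mathbb R)$.

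To prove this final identity, endow $M=\Hom_{\qgr A_\mathbb R}(t_\mathbb R^{-1}(\omega_\mathbb R),\omega_\mathbb R)$ with an $\End_{\qgr A_\mathbb R}(\omega_\mathbb R)$-bimodule structure: the left action $\lambda_\alpha=\alpha_*^{(T)}$ is postcomposition, while the right action $\rho_\alpha$ is precomposition by $t_\mathbb R^{-1}(\alpha)$, obtained via the ring isomorphism $\End_{\qgr A_\mathbb R}(\omega_\mathbb R)\cong\End_{\qgr A_\mathbb R}(t_\mathbb R^{-1}(\omega_\mathbb R))$ induced by the autoequivalence $t_\mathbb R^{-1}$. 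Composing $\theta^0$ with this autoequivalence produces a natural isomorphism $\phi\colon M\cong\Ext_{\qgr A_\mathbb R}^1(\mathcal L_\mathbb R,\omega_\mathbb R)^\vee$; a direct computation with the Serre pairing shows $\phi\circ\rho_\alpha=((\alpha_*)_1)^\vee\circ\phi$, so $\det(\rho_\alpha)=\det((\alpha_*)_1)$. The hypothesis that $\End_{\QGr A_\mathbb R}(\mathcal A_\mathbb R)$ is simple, combined with $\theta^0$ and $\theta^1$, forces $\End_{\qgr A_\mathbb R}(\omega_\mathbb R)$ to be a simple finite-dimensional $\mathbb R$-algebra as well. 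Lemma~\ref{semisimple} applied to the bimodule $M$ then gives $\det(\lambda_\alpha)=\det(\rho_\alpha)$, and therefore $\det(\alpha_*^{(T)})=\det((\alpha_*)_1)$, closing the argument.

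The main obstacle is the last step. Because $\theta^0$ is natural only in the first variable, postcomposition by $\alpha$ on $\Hom(-,\omega)$ does not admit a direct naturality companion on the Ext-dual side. The workaround is precisely Lemma~\ref{semisimple}: it permits one to swap left multiplication by $\alpha$ for right multiplication, and the latter \emph{is} transportable through the autoequivalence $t_\mathbb R^{-1}$ to an operation on $\Ext^1(\mathcal L_\mathbb R,\omega_\mathbb R)^\vee$ whose determinant is visibly $\det((\alpha_*)_1)$. This is the structural reason why the $\det(\alpha_*)^{-1}$ correction appearing in the definition of $\det\Ext$ is exactly the right normalization to make Serre duality lift to an isomorphism of arithmetic line bundles.
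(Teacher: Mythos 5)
Your proposal reproduces the paper's own argument almost step for step: the same use of the dualizing-object isomorphism $\theta^0$ and of Serre duality $\theta^1$ together with Remark~\ref{det} to cancel all $\beta$-terms, the same reduction of the theorem to the single identity $\det(\alpha_*)=\det\left((\alpha_*)_1\right)$ (your $\alpha_*^{(T)}$ is the paper's $\alpha_*=\Hom_{\qgr A_\mathbb R}\left(t_\mathbb R^{-1}(\omega_\mathbb R),\alpha\right)$), the same transport of $(\alpha_*)_1$ to precomposition by $t_\mathbb R^{-1}(\alpha)$ via the natural isomorphism $\Ext^1_{\qgr A_\mathbb R}(\mathcal L_\mathbb R,-)^\vee\cong\Hom_{\qgr A_\mathbb R}\left(t_\mathbb R^{-1}(-),\omega_\mathbb R\right)$, and the same bimodule trick with Lemma~\ref{semisimple} to equate left and right multiplication by $\alpha$ on $\Hom_{\qgr A_\mathbb R}\left(t_\mathbb R^{-1}(\omega_\mathbb R),\omega_\mathbb R\right)$. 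This is exactly the structural role you correctly assign to the correction factor $\det(\alpha_*)^{-1}$ in the definition of $\det\Ext$.

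There is, however, one genuine gap: the assertion that simplicity of $B=\Hom_{\QGr A_\mathbb R}(\mathcal A_\mathbb R,\mathcal A_\mathbb R)$, ``combined with $\theta^0$ and $\theta^1$,'' forces $E=\End_{\qgr A_\mathbb R}(\omega_\mathbb R)$ to be a finite dimensional simple $\mathbb R$-algebra. Serre duality only yields $E\cong\Ext^1_{\qgr A_\mathbb R}(\mathcal A_\mathbb R,\omega_\mathbb R)^\vee\cong B$ as \emph{real vector spaces}; this gives $\dim_\mathbb R E=\dim_\mathbb R B<\infty$ but transfers no ring structure, and simplicity is precisely the ring-theoretic input that Lemma~\ref{semisimple} requires, so it cannot be waved through at this point. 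The paper explicitly flags this and closes it by a separate argument: left multiplication induces a ring homomorphism $\lambda\colon B\map\End_{\Gr A_\mathbb R}\left(\Gamma_\mathbb R(\omega_\mathbb R)\right)^{\circ}$, which is injective because $B$ is simple and the target is nonzero; full faithfulness of the representing functor $\Gamma_\mathbb R$ (Lemma~\ref{lemma1}(ii)) identifies $\End_{\Gr A_\mathbb R}\left(\Gamma_\mathbb R(\omega_\mathbb R)\right)$ with $E$, and the dimension equality coming from the duality isomorphism then forces $\lambda$ to be bijective, so $E\cong B$ is indeed simple. You need to supply this (or an equivalent) argument before invoking Lemma~\ref{semisimple}; the rest of your write-up matches the paper's proof.
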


\begin{proof}
Recall
$$\lambda(\ol{\mathcal L},\ol{\omega})=\det\Hom(\ol{\mathcal L},\ol{\omega})\otimes\left(\det\Ext(\ol{\mathcal L},\ol{\omega})\right)^{-1}$$
and
$$\lambda(\ol{\mathcal L})=\det\Hom(\ol{\mathcal A},\ol{\mathcal L})\otimes\left(\det\Ext(\ol{\mathcal A},\ol{\mathcal L})\right)^{-1}.$$
If $\ol{\mathcal L}=(\mathcal L,\beta)$ and $\ol{\omega}=(\omega,\alpha)$ then
\begin{equation}\label{dual1}
\det\Hom\left(\ol{\mathcal L},\ol{\omega}\right)=\left(\det\Hom_{\qgr A}(\mathcal L,\omega),\det\left((\beta^{-1})^*\right)\det(\alpha_*)\right)
\end{equation}
and
\begin{equation}\label{dual2}
\left(\det\Ext(\ol{\mathcal A},\ol{\mathcal L})\right)^{-1}=\left(\left(\det\Ext_{\qgr A}^1(\mathcal A,\mathcal L)\right)^{-1},\det(\beta_*)_1^{-1}\det(\alpha_*)\right),
\end{equation}
where $(\beta^{-1})^*=\Hom_{\qgr A_\mathbb R}\left(\beta^{-1},\omega_\mathbb R\right),$ $(\beta_*)_1=\Ext_{\qgr A_\mathbb R}^1\left(\mathcal A_\mathbb R,\beta\right)$ and $\alpha_*=\Hom_{\qgr A_\mathbb R}\left(\mathcal L_\mathbb R,\alpha\right).$ By definition of the dualizing object, the functors $\Ext_{\qgr A_\mathbb R}^1(\mathcal A_\mathbb R,-)^\vee$ and $\Hom_{\qgr A_\mathbb R}(-,\omega_\mathbb R)$ are isomorphic, so according to Remark \ref{det},
\begin{equation}\label{dual3}
\det\left((\beta^{-1})^*\right)=\det\left((\beta^{-1}_*)_1^\vee\right).
\end{equation}
Moreover,
\begin{equation}\label{dual4}
\det\left((\beta^{-1}_*)_1^\vee\right)=\det\left((\beta^{-1}_*)_1\right)=\det(\beta_*)_1^{-1}.
\end{equation}
Combining (\ref{dual1}), (\ref{dual2}), (\ref{dual3}) and (\ref{dual4}) yields
\begin{equation}\label{dual5}
\det\Hom\left(\ol{\mathcal L},\ol{\omega}\right)\cong\left(\det\Ext\left(\ol{\mathcal A},\ol{\mathcal L}\right)\right)^{-1}.
\end{equation}

It remains to prove
\begin{equation}\label{dual6}
\det\Hom\left(\ol{\mathcal A},\ol{\mathcal L}\right)\cong\left(\det\Ext\left(\ol{\mathcal L},\ol{\omega}\right)\right)^{-1}.
\end{equation}
By definition,
\begin{equation}\label{dual7}
\det\Hom\left(\ol{\mathcal A},\ol{\mathcal L}\right)=\left(\det\Hom_{\qgr A}(\mathcal A,\mathcal L),\det(\beta_*)\right)
\end{equation}
and
\begin{equation}\label{dual8}
\left(\det\Ext\left(\ol{\mathcal L},\ol{\omega}\right)\right)^{-1}=\left(\left(\det\Ext_{\qgr A}^1(\mathcal L,\omega)\right)^{-1},\det\left((\beta^{-1})_1^*\circ(\alpha_*)_1\right)^{-1}\det(\alpha_*)\right),
\end{equation}
where $\alpha_*=\Hom_{\qgr A_\mathbb R}\left(t_\mathbb R^{-1}\left(\omega_\mathbb R\right),\alpha\right)$ and $t_\mathbb R$ is an autoequivalence associated to the invertible object $\mathcal L_\mathbb R.$ Since $\Proj A_\mathbb R$ satisfies Serre duality, the functors $\Ext_{\qgr A_\mathbb R}^1(-,\omega_\mathbb R)$ and $\Hom_{\qgr A_\mathbb R}(\mathcal A_\mathbb R,-)^\vee$ are isomorphic, hence Remark \ref{det} yields
\begin{equation}\label{dual9}
\det\left(\beta_1^*\right)=\det\left((\beta_*)^\vee\right).
\end{equation}
But $\det\left((\beta^{-1})_1^*\right)^{-1}=\det\left(\beta_1^*\right)$ and $\det\left((\beta_*)^\vee\right)=\det\left(\beta_*\right),$ so we may combine (\ref{dual7}), (\ref{dual8}) and (\ref{dual9}) to see that
$$\det\Hom(\ol{\mathcal A},\ol{\mathcal L})\cong\left(\left(\det\Ext_{\qgr A}^1(\mathcal L,\omega)\right)^{-1},\det\left((\beta^{-1})_1^*\right)^{-1}\right).$$
Hence in order to prove (\ref{dual6}), it suffices to show
\begin{equation}\label{dual10}
\det\left((\alpha_*)_1\right)^{-1}\det(\alpha_*)=1.
\end{equation}

Since $\mathcal L_\mathbb R$ is invertible, we have natural isomorphisms
$$\Ext_{\qgr A_\mathbb R}^1\left(\mathcal L_\mathbb R,-\right)\cong\Ext_{\qgr A_\mathbb R}^1\left(t_\mathbb R(\mathcal A_\mathbb R),-\right)\cong\Ext_{\qgr A_\mathbb R}^1\left(\mathcal A_\mathbb R,t_\mathbb R^{-1}(-)\right).$$
In combination with the natural isomorphism
$$\Ext_{\qgr A_\mathbb R}^1\left(\mathcal A_\mathbb R,t_\mathbb R^{-1}(-)\right)^\vee\cong\Hom_{\qgr A_\mathbb R}\left(t_\mathbb R^{-1}(-),\omega_\mathbb R\right),$$
which is provided by definition of the dualizing object, we obtain a natural isomorphism
$$\Ext_{\qgr A_\mathbb R}^1\left(\mathcal L_\mathbb R,-\right)^\vee\cong\Hom_{\qgr A_\mathbb R}\left(t_\mathbb R^{-1}(-),\omega_\mathbb R\right).$$
Applying Remark \ref{det} leads to
\begin{equation}\label{dual11}
\det\left((\alpha_*)_1\right)=\det\left((\alpha_*)_1^\vee\right)=\det\left(t_\mathbb R^{-1}(\alpha)^*\right).
\end{equation}

Let $E$ denote the endomorphism ring $\End_{\qgr A_\mathbb R}(\omega_\mathbb R)$ of the dualizing object $\omega_\mathbb R.$ There is an $E$-bimodule structure on the abelian group $G=\Hom_{\qgr A_\mathbb R}\left(t_\mathbb R^{-1}(\omega_\mathbb R),\omega_\mathbb R\right)$ defined by
$$e\varphi f=e\circ\varphi\circ t_\mathbb R^{-1}(f),\quad\text{for all }e,f\in E,\varphi\in G.$$
In terms of this $E$-bimodule structure, applying the homomorphism $\alpha_*= \linebreak \Hom_{\qgr A_\mathbb R}\left(t_\mathbb R^{-1}(\omega_\mathbb R),\alpha\right)$ is simply left multiplication by $\alpha.$ Analogously, applying $t_\mathbb R^{-1}(\alpha)^*$ is right multiplication by $\alpha.$ Therefore, if $E$ is a finite dimensional simple $\mathbb R$-algebra, then it follows from Lemma \ref{semisimple} that $\det(\alpha_*)=\det(t_\mathbb R^{-1}(\alpha)^*),$ which together with (\ref{dual11}) establishes (\ref{dual10}). It thus remains to show that $E$ is simple and finite dimensional over $\mathbb R.$

Since $\Proj A_\mathbb R$ satisfies Serre duality, there exist two natural isomorphisms
$$\theta^i:\Ext_{\qgr A_\mathbb R}^i(-,\omega_\mathbb R)\map\Ext_{\qgr A_\mathbb R}^{1-i}(\mathcal A_\mathbb R,-)^\vee,\quad i=0,1.$$
Therefore
\begin{equation}\label{dual12}
E=\Hom_{\qgr A_\mathbb R}\left(\omega_\mathbb R,\omega_\mathbb R\right)\cong\Ext_{\qgr A_\mathbb R}^1\left(\mathcal A_\mathbb R,\omega_\mathbb R\right)^\vee\cong\Hom_{\qgr A_\mathbb R}\left(\mathcal A_\mathbb R,\mathcal A_\mathbb R\right).
\end{equation}
By assumption, $B=\Hom_{\qgr A_\mathbb R}\left(\mathcal A_\mathbb R,\mathcal A_\mathbb R\right)$ is a simple $\mathbb R$-algebra, whose dimension over $\mathbb R$ is finite because $\Proj A_\mathbb R$ is $H$-finite. The isomorphism in (\ref{dual12}) is an isomorphism of real vector spaces, so it only implies that $E$ has the same finite dimension as $B,$ but it does not ensure that $E\cong B$ as rings.

To verify this last point, we first note that left multiplication $\lambda_b$ on $\Gamma_\mathbb R(\omega_\mathbb R)$ induces the ring homomorphism
$$\lambda:B\map E'=\End_{\Gr A_\mathbb R}\left(\Gamma_\mathbb R(\omega_\mathbb R)\right)^{\circ},\; b\mapsto\lambda_b.$$
Since $E'\neq 0,$ the two-sided ideal $\ker\lambda$ of the simple ring $B$ is proper. This implies that $\lambda$ is injective. On the other hand, according to Lemma \ref{lemma1}(ii), the representing functor $\Gamma_\mathbb R:\QGr A_\mathbb R\map\Gr A_\mathbb R$ is fully faithful, thus the rings $E$ and $E'$ are isomorphic. But $\lambda$ is also a homomorphism of real vector spaces, so it follows from (\ref{dual12}) that $\lambda$ is in fact an isomorphism, which shows that the rings $E$ and $B$ are isomorphic. Hence $E$ is indeed a finite dimensional simple $\mathbb R$-algebra, and the theorem is established.
\end{proof}

We proceed now with the definition of the intersection of arithmetic bundles on noncommutative arithmetic surfaces.
\begin{definition}
Let $\ol{\mathcal L}$ be an arithmetic line bundle and $\ol{\mathcal E}$ be an arithmetic vector bundle on a noncommutative arithmetic surface $\Proj A.$ We set
\begin{equation}\label{inter2}
\langle\ol{\mathcal L},\ol{\mathcal E}\rangle =\lambda(\ol{\mathcal L},\ol{\mathcal E})\otimes_\mathbb Z \lambda(\ol{\mathcal L},\ol{\mathcal A})^{-1} \otimes_\mathbb Z \lambda(\ol{\mathcal E})^{-1} \otimes_\mathbb Z \lambda(\ol{\mathcal A})
\end{equation}
and call it the \emph{intersection} of $\ol{\mathcal L}$ with $\ol{\mathcal E}.$ Moreover,
\begin{equation}\label{thomas}
(\ol{\mathcal L},\ol{\mathcal E})=-\adeg\left(\langle\ol{\mathcal L},\ol{\mathcal E}\rangle\right)
\end{equation}
is called the \emph{intersection number} of $\ol{\mathcal L}$ with $\ol{\mathcal E}.$
\end{definition}
Note that $\langle\ol{\mathcal L},\ol{\mathcal E}\rangle$ is an arithmetic line bundle on $\Spec\mathbb Z$ and $(\ol{\mathcal L},\ol{\mathcal E})$ is the negative of its arithmetic (or Arakelov) degree. The minus sign occurs for the following reason. If $\ol{\mathcal L}$ and $\ol{\mathcal M}$ are two arithmetic line bundles on a commutative arithmetic surface $X,$ then
$$\lambda(\ol{\mathcal L},\ol{\mathcal M})\cong\lambda(\ol{\mathcal O}_X,\ol{\mathcal L}^{-1}\otimes\ol{\mathcal M})=\lambda(\ol{\mathcal L}^{-1}\otimes\ol{\mathcal M}).$$
Putting this into (\ref{inter2}) yields
$$\langle\ol{\mathcal L},\ol{\mathcal M}\rangle\cong\lambda(\ol{\mathcal L}^{-1}\otimes\ol{\mathcal M})\otimes\lambda(\ol{\mathcal L}^{-1})^{-1}\otimes\lambda(\ol{\mathcal M})^{-1}\otimes\lambda(\ol{\mathcal O}_X).$$
Comparing the right hand side with $(\ref{inter}),$ we see that it computes the intersection of $\ol{\mathcal L}^{-1}$ and $\ol{\mathcal M}.$ Since $\adeg\big(\langle\ol{\mathcal L}^{-1},\ol{\mathcal M}\rangle\big)=-\adeg\left(\langle\ol{\mathcal L},\ol{\mathcal M}\rangle\right),$ this explains the minus sign in (\ref{thomas}).

Finally, we are able to prove the following Riemann-Roch theorem:

\begin{theorem}
Let $\ol{\mathcal L}$ be an arithmetic line bundle on a noncommutative arithmetic surface $\Proj A,$ and suppose that $\Proj A_\mathbb R$ satisfies Serre duality. If $\Hom_{\QGr A_\mathbb R}\left(\mathcal A_\mathbb R,\mathcal A_\mathbb R\right)$ is a simple ring, then there is an isomorphism
\begin{equation}\label{RR}
\lambda(\ol{\mathcal L})^{\otimes -2}\otimes\lambda(\ol{\mathcal A})^{\otimes 2}\cong\langle\ol{\mathcal L},\ol{\mathcal L}\rangle\otimes\langle\ol{\mathcal L},\ol{\omega}\rangle^{-1}.
\end{equation}
\end{theorem}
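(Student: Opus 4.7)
The plan is to unfold the bracket on the right hand side of (\ref{RR}), apply Theorem \ref{dual} to eliminate the factors involving $\ol{\omega}$, and thereby reduce the claim to a single auxiliary isomorphism $\lambda(\ol{\mathcal L},\ol{\mathcal L})\cong\lambda(\ol{\mathcal A})$, which I then prove by exploiting the invertibility of $\mathcal L$ together with Lemma \ref{semisimple}.

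Substituting the definition of $\langle\cdot,\cdot\rangle$ into the right hand side of (\ref{RR}), the factors $\lambda(\ol{\mathcal L},\ol{\mathcal A})^{\pm 1}$ and $\lambda(\ol{\mathcal A})^{\pm 1}$ cancel and I obtain
$$\langle\ol{\mathcal L},\ol{\mathcal L}\rangle\otimes\langle\ol{\mathcal L},\ol{\omega}\rangle^{-1}\cong\lambda(\ol{\mathcal L},\ol{\mathcal L})\otimes\lambda(\ol{\mathcal L})^{-1}\otimes\lambda(\ol{\mathcal L},\ol{\omega})^{-1}\otimes\lambda(\ol{\omega}).$$
Theorem \ref{dual} applied to $\ol{\mathcal L}$ gives $\lambda(\ol{\mathcal L},\ol{\omega})\cong\lambda(\ol{\mathcal L})$, and applied to the trivial arithmetic line bundle $\ol{\mathcal A}$ gives $\lambda(\ol{\omega})=\lambda(\ol{\mathcal A},\ol{\omega})\cong\lambda(\ol{\mathcal A})$. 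Substituting both reduces the right hand side to $\lambda(\ol{\mathcal L},\ol{\mathcal L})\otimes\lambda(\ol{\mathcal L})^{-2}\otimes\lambda(\ol{\mathcal A})$, so (\ref{RR}) is equivalent to $\lambda(\ol{\mathcal L},\ol{\mathcal L})\cong\lambda(\ol{\mathcal A})$.

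For this remaining isomorphism I fix an autoequivalence $t$ of $\qgr A$ with $t(\mathcal A)\cong\mathcal L$. Functoriality of $t$ produces natural isomorphisms $\Hom_{\qgr A}(\mathcal A,\mathcal A)\cong\Hom_{\qgr A}(\mathcal L,\mathcal L)$ and $\Ext_{\qgr A}^1(\mathcal A,\mathcal A)\cong\Ext_{\qgr A}^1(\mathcal L,\mathcal L)$, matching the underlying $\mathbb Z$-modules of $\lambda(\ol{\mathcal L},\ol{\mathcal L})$ and $\lambda(\ol{\mathcal A})$. To match the automorphisms in the second components, I observe that on $\End_{\qgr A_\mathbb R}(\mathcal L_\mathbb R)$ the map $(\beta^{-1})^*\circ\beta_*$ is conjugation by $\beta$, and $t_\mathbb R$ transports this to conjugation by some unit of the finite-dimensional simple $\mathbb R$-algebra $B=\End_{\qgr A_\mathbb R}(\mathcal A_\mathbb R)$; Lemma \ref{semisimple} then yields $\det((\beta^{-1})^*\circ\beta_*)=1$, which equals the corresponding determinant $\det(\id\circ\id)=1$ occurring in $\lambda(\ol{\mathcal A})$. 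An analogous argument on $\Ext_{\qgr A_\mathbb R}^1(\mathcal L_\mathbb R,\mathcal L_\mathbb R)$, viewed as a $B$-bimodule via $t_\mathbb R$ and the simplicity argument from the end of the proof of Theorem \ref{dual}, shows that the conjugation part $(\beta^{-1})_1^*\circ(\beta_*)_1$ likewise has determinant $1$; the additional correction $\det(\alpha_*)^{-1}$ in the definition of $\det\Ext$ matches the corresponding correction for $\lambda(\ol{\mathcal A})$ because $t_\mathbb R$ provides a natural isomorphism between $\Hom_{\qgr A_\mathbb R}(t_\mathbb R^{-1}(\mathcal L_\mathbb R),\omega_\mathbb R)$ and $\Hom_{\qgr A_\mathbb R}(\mathcal A_\mathbb R,\omega_\mathbb R)$ that intertwines the two versions of $\alpha_*$, so Remark \ref{det} identifies their determinants.

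The main obstacle is the bookkeeping in the $\Ext$-slot: I must simultaneously keep track of the conjugation action and the two $\alpha_*$ corrections, which amounts essentially to rerunning the final paragraph of the proof of Theorem \ref{dual} in the present setting. Once $\lambda(\ol{\mathcal L},\ol{\mathcal L})\cong\lambda(\ol{\mathcal A})$ is established, the Riemann-Roch isomorphism (\ref{RR}) follows from the two elementary substitutions performed in the second paragraph.
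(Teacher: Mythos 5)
Your proposal is correct and follows essentially the same route as the paper: both reduce the statement, via the algebraic cancellations in the definition of $\langle\cdot,\cdot\rangle$ and two applications of Theorem \ref{dual}, to the single auxiliary isomorphism $\lambda(\ol{\mathcal L},\ol{\mathcal L})\cong\lambda(\ol{\mathcal A})$. The paper establishes that auxiliary isomorphism first and then unfolds, whereas you unfold first, but the content is identical: identify $\End_{\qgr A_\mathbb R}(\mathcal L_\mathbb R)$ as a finite-dimensional simple $\mathbb R$-algebra through the autoequivalence $t_\mathbb R$, apply Lemma \ref{semisimple} on the $\Hom$- and $\Ext^1$-bimodules to see that the conjugation-by-$\beta$ determinants are trivial, and use Remark \ref{det} together with the natural isomorphism $\Hom_{\qgr A_\mathbb R}(t_\mathbb R^{-1}(\mathcal L_\mathbb R),-)\cong\Hom_{\qgr A_\mathbb R}(\mathcal A_\mathbb R,-)$ to match the two $\alpha_*$-corrections.
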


\begin{proof}
Once
\begin{equation}\label{RR1}
\lambda(\ol{\mathcal L},\ol{\mathcal L})\cong\lambda(\ol{\mathcal A})
\end{equation}
is established, the theorem follows immediately. Indeed, if (\ref{RR1}) holds then
\begin{equation}\label{RR2}
\langle\ol{\mathcal L},\ol{\mathcal L}\rangle=\lambda(\ol{\mathcal L},\ol{\mathcal L})\otimes\lambda(\ol{\mathcal L},\ol{\mathcal A})^{-1}\otimes\lambda(\ol{\mathcal L})^{-1}\otimes\lambda(\ol{\mathcal A})\cong\lambda(\ol{\mathcal A})^{\otimes 2}\otimes\lambda(\ol{\mathcal L},\ol{\mathcal A})^{-1}\otimes\lambda(\ol{\mathcal L})^{-1}.
\end{equation}
On the other hand, according to Theorem \ref{dual}, we have $\lambda(\ol{\mathcal L},\ol{\omega})\cong\lambda(\ol{\mathcal L})$ and $\lambda(\ol{\omega})\cong\lambda(\ol{\mathcal A}),$ whence
\begin{equation}\label{RR3}
\langle\ol{\mathcal L},\ol{\omega}\rangle^{-1}=\lambda(\ol{\mathcal L},\ol{\omega})^{-1}\otimes\lambda(\ol{\mathcal L},\ol{\mathcal A})\otimes\lambda(\ol{\omega})\otimes\lambda(\ol{\mathcal A})^{-1}\cong\lambda(\ol{\mathcal L})^{-1}\otimes\lambda(\ol{\mathcal L},\ol{\mathcal A}).
\end{equation}
Combining (\ref{RR2}) and (\ref{RR3}) yields (\ref{RR}).

So let us prove (\ref{RR1}). Since $\mathcal L_\mathbb R$ is invertible, there is an autoequivalence $t_\mathbb R$ of $\QGr A_\mathbb R$ such that $t_\mathbb R(\mathcal A_\mathbb R)\cong\mathcal L_\mathbb R.$ But every equivalence is a fully faithful functor, therefore the rings $E=\End_{\qgr A_\mathbb R}(\mathcal L_\mathbb R)$ and $\End_{\qgr A_\mathbb R}(\mathcal A_\mathbb R)$ are isomorphic, which shows that $E$ is a finite dimensional simple $\mathbb R$-algebra. If $\ol{\mathcal L}=(\mathcal L,\beta)$ then, in terms of the ring structure of $E,$ $\beta_*:E\map E$ is left and $\beta^*$ is right multiplication by $\beta.$ Hence it follows from Lemma \ref{semisimple} that $\det(\beta_*)=\det(\beta^*),$ whence $\det\left((\beta^{-1})^*\beta_*\right)=1,$ which shows
\begin{equation}\label{RR4}
\det\Hom(\ol{\mathcal L},\ol{\mathcal L})\cong\det\Hom(\ol{\mathcal A},\ol{\mathcal A}).
\end{equation}
The same argument also applies to the bundle $\Ext(\ol{\mathcal L},\ol{\mathcal L}).$ More precisely, the abelian group $\Ext_{\qgr A_\mathbb R}^1\left(\mathcal L_\mathbb R,\mathcal L_\mathbb R\right)$ has a natural $E$-bimodule structure for which $(\beta_*)_1$ is left and $\beta_1^*$ is right multiplication by $\beta.$ And since $E$ is a finite dimensional simple $\mathbb R$-algebra, it follows from Lemma \ref{semisimple} that $\det\left((\beta_*)_1\right)=\det\left((\beta)_1^*\right),$ whence
\begin{equation}\label{RR5}
\det\left((\beta^{-1})_1^*(\beta_*)_1\right)=1.
\end{equation}
On the other hand there is a natural isomorphism
$$\Hom_{\qgr A_\mathbb R}\left(t_\mathbb R^{-1}\left(\mathcal L_\mathbb R\right),-\right)\cong\Hom_{\qgr A_\mathbb R}\left(\mathcal A_\mathbb R,-\right),$$
and applying Remark \ref{det} to the automorphism $\alpha:\omega_\mathbb R\map\omega_\mathbb R$ yields
\begin{equation}\label{RR6}
\det\Hom_{\qgr A_\mathbb R}\left(t_\mathbb R^{-1}\left(\mathcal L_\mathbb R\right),\alpha\right)=\det\Hom_{\qgr A_\mathbb R}\left(\mathcal A_\mathbb R,\alpha\right).
\end{equation}
Combining (\ref{RR5}) and (\ref{RR6}) shows
$$\det\Ext\left(\ol{\mathcal L},\ol{\mathcal L}\right)\cong\det\Ext\left(\ol{\mathcal A},\ol{\mathcal A}\right),$$
which together with (\ref{RR4}) establishes (\ref{RR1}) and thus completes the proof.
\end{proof}

To get a Riemann-Roch formula, we still have to introduce the Euler characteristic of an arithmetic vector bundle on a noncommutative arithmetic surface. Let $M$ be a finitely generated $\mathbb Z$-module and suppose that $M$ has a volume. Recall the Euler characteristic
$$\chi(M)=-\log\operatorname{vol}(M_\mathbb R/M)+\log|M_{tor}|.$$
Hence, if $\ol{E}$ is a hermitian vector bundle on $\Spec\mathbb Z,$ then $\chi(\ol{E})=\adeg\ol{E}.$ Now, in analogy with \cite{La}, page 112, we define the \emph{Euler characteristic} of an arithmetic vector bundle $\ol{\mathcal E}$ on a noncommutative arithmetic surface $\Proj A$ to be
$$\chi(\ol{\mathcal E})=\chi\left(\Hom(\ol{\mathcal A},\ol{\mathcal E})\right)-\chi\left(\Ext(\ol{\mathcal A},\ol{\mathcal E})\right)=\adeg\lambda(\ol{\mathcal E}).$$

Using this notion, taking degrees in (\ref{RR}) yields the Riemann-Roch formula
$$\chi(\ol{\mathcal L})=\frac{1}{2}\left((\ol{\mathcal L},\ol{\mathcal L})-(\ol{\mathcal L},\ol{\omega})\right)+\chi(\ol{\mathcal A}).$$
Note that this formula looks exactly like the Riemann-Roch formula for hermitian line bundles on commutative arithmetic surfaces, cf. \cite{Mo}, Theorem 6.13.

\bigskip
\bigskip

\begin{tabbing}
Departement Mathematik\\
ETH Z\"urich\\
R\"amistrasse 101\\
8092 Z\"urich\\
Switzerland\\
Phone: +41 44 632 4442\\
Fax:  +41 44 632 1570\\
Email: borek@math.ethz.ch\\
\end{tabbing}

\end{document}